\documentclass[11pt]{amsart}
\usepackage[margin=1.1in]{geometry}
\usepackage{amsmath,amssymb,amsthm}
\usepackage{hyperref}
\newtheorem{example}{Example}

\theoremstyle{plain}
\newtheorem{theorem}{Theorem}[section]
\newtheorem{lemma}[theorem]{Lemma}
\newtheorem{proposition}[theorem]{Proposition}
\newtheorem{corollary}[theorem]{Corollary}
\theoremstyle{remark}
\newtheorem{remark}[theorem]{Remark}

\newcommand{\Z}{\mathbb{Z}}
\newcommand{\Q}{\mathbb{Q}}
\newcommand{\F}{\mathbb{F}}
\newcommand{\OK}{\mathcal{O}_K}
\newcommand{\C}{\mathfrak{C}}
\newcommand{\dd}{\mathfrak{d}}
\newcommand{\m}{\mathfrak{m}}
\newcommand{\vm}{v_{\m}}

\title{A Local Approach to Monogenity with an Application to Lenny Jones' Conjecture}
\author{Michail Karatarakis}
\address{Radboud University, Nijmegen, The Netherlands
}
\email{michail.karatarakis@ru.nl}

\author{Sumandeep Kaur}
\address[Sumandeep Kaur\footnote{Corresponding author}]{Department of Mathematics, Shanghai University, China} 
\email{suman@shu.edu.cn}

\date{}
\subjclass [2010]{11R04, 11R09.}
\keywords{Monogenic, Index, polynomial.}

\begin{document}

\begin{abstract}
The study of monogenic polynomials is a classical problem in algebraic number theory. Existing criteria for deciding whether a polynomial is monogenic typically rely on discriminant computations together with methods such as Dedekind's criterion, Newton polygons, or valuation-theoretic techniques. In this paper, we develop a general local criterion for the $p$-maximality of orders generated by roots of arbitrary monic irreducible polynomials. As an application, we apply this to irreducible polynomials of the type $f(X)=X^n+A(BX+1)^m,$
where $1\le m<n$, $\gcd(n,mB)=1$, and $A,~B\in \Z\setminus\{0\}$. We show that  $f$ is monogenic
if and only if both $A 
\quad\text{and}\quad n^n+(-1)^{n+m}B^n(n-m)^{\,n-m}m^mA$
are square-free. This provides a new proof of the main theorem of \cite{KK}, thereby proving Lenny Jones' conjecture \cite[Conjecture 4.1]{LJ}. Furthermore, we obtain explicit infinite families of irreducible non-monogenic polynomials, including trinomial, quadrinomial, and power-compositional families.
\end{abstract}
\maketitle
\section{Introduction}

Let $f(X)$ be a monic irreducible polynomial with integer coefficients, let $\theta$ be a root of
$f$, and let  $K=\mathbb Q(\theta)$. A classical problem in algebraic number
theory is to determine whether the order $\mathbb Z[\theta]$ coincides with the
ring of algebraic integers $\mathcal O_K$ of $K$ (\cite{Gaal,GR},\cite{gas},\cite{GY},\cite{LJ, LJ1, LJ2, LJJ, HS1}). Equivalently, one can ask conditions under which the polynomial $f$ defines a monogenic field. This problem has attracted
continuous attention from many mathematicians because the existence of a power integral basis is closely
connected with the arithmetic structure of number fields, including
discriminants, ramification, and ideal factorization [\cite{Mar, Nar}].

The discriminant of $f$ and the discriminant $d_K$ of $K$ are related by the classical identity
\[
\operatorname{disc}(f)
=
[\mathcal O_K:\mathbb Z[\theta]]^{2}d_K.
\]
This identity shows that determining whether $K$ is monogenic reduces to determining the index
$[\mathcal O_K:\mathbb Z[\theta]].$
Consequently, many existing approaches in the literature first determine the finite set of possible index divisors, usually with the help of the discriminant of the defining polynomial. The local behaviour at the primes dividing the index is then analyzed using Dedekind's criterion or other techniques. When the degree of the polynomial increases, the problem becomes more difficult to handle, leading to the development of more sophisticated methods such as Ore's index theorem, Newton polygon techniques, higher-order Newton polygons, the Montes algorithm, and valuation-theoretic approaches.
These methods have been successfully applied to determine monogenity criteria for many families of polynomials \cite{LJ, LJ1, LJ2, LJJ, HS1}. Although the polynomial families studied in the literature vary considerably, the corresponding proofs usually follow a similar pattern. Consequently, the local analysis must be carried out separately for each new polynomial family and despite their effectiveness, these methods remain largely family dependent. Whenever a new parametric family is considered, one usually has to derive new discriminant identities, construct new Newton polygons, or perform fresh local computations.



This naturally leads to the following question:\\ Can one formulate a general local criterion whose hypotheses can be verified directly for different polynomial families, thereby avoiding the need to develop a new local argument in each case?

The purpose of this paper is to answer this question in the affirmative. We develop a general local criterion for
deciding $p$-maximality that is independent of the defining polynomial.
Our starting point is the observation that for a wide range of polynomial
families, the essential local obstruction is governed by the behavior of
repeated roots modulo $p$. We show that this phenomenon admits a uniform
description, leading to a necessary and sufficient criterion for
$p$-maximality that is formulated for an arbitrary monic irreducible
polynomial in $\mathbb Z[X]$. Consequently, the general local theory is
developed, which is independent of any particular polynomial
family.

The significance of this approach is that it separates the general local
theory from the arithmetic of individual polynomial families. More
precisely, the criterion depends only on the local behaviour of repeated
roots modulo $p$, together with an explicit congruence obtained from their
Taylor expansion and a corresponding B\'ezout identity over
$\mathbb F_p$. Once these conditions have been established, the proof for a
particular polynomial family reduces to verifying explicit local
hypotheses, while the ideal-theoretic arguments remain unchanged. Thus the
same local criterion can be applied uniformly to different polynomial
families without constructing a new local argument in each case.

As an application of this work, we apply this criterion to the irreducible polynomials of the type $f(X)=X^n+A(BX+1)^m,$
where $1\le m<n$, $\gcd(n,mB)=1$ and $A,~B\in \Z\setminus \{0\}$. We yields a discriminant-free proof of the monogenity criterion of $f$, thereby recovering the theorems of \cite{KK}, and proving Lenny Jones' conjecture \cite[Conjecture 4.1]{LJ}.


\section{Preliminaries}

Throughout the paper, let $K=\mathbb Q(\theta)$ be a number field of degree $n\geq 2$ 
where $\theta$ is a root of a monic irreducible polynomial
$f(X)\in\mathbb Z[X]$. We write $\mathcal O_K$ for the ring of integers of
$K$ and  $I(\theta):=[\mathcal O_K:\mathbb Z[\theta]]$, for
the index of the order $\mathbb Z[\theta]$ in $\mathcal O_K$. Thus $K$ is
monogenic if  $I(\theta)=1$.

The conductor of the order $\mathbb Z[\theta]$ is the largest ideal of
$\mathcal O_K$ contained in $\mathbb Z[\theta]$, defined as
\[
\C_\theta
=
\{\,x\in\mathcal O_K : x\mathcal O_K\subseteq\mathbb Z[\theta]\,\}.
\]
The conductor plays a fundamental role in the study of orders and their
indices (\cite[Chapter~III]{Mar}, \cite[Chapter~I, \S4]{Nar}).

For a prime number $p$, the order $\mathbb Z[\theta]$ is said to be
{$p$-maximal} if
$p\nmid I(\theta).$
Equivalently, the localization
$\mathbb Z[\theta]_{(p)}$ is the maximal order of $K$ at the prime $p$.
Since
\[
I(\theta)=\prod_{p}p^{v_p(I(\theta))},
\]
the order $\mathbb Z[\theta]$ is maximal if and only if it is
$p$-maximal for every prime $p$.

The following two classical results will be used throughout the paper. The
first is the conductor-different identity (\cite[Chapter~III, Proposition~2.7]{Neukirch}).

\begin{proposition}\label{prop:conductor-different}
Let $\C_\theta$ denote the conductor of the order
$\mathbb Z[\theta]$ in $\mathcal O_K$, and let
$\mathfrak D_K=\mathfrak D_{\mathcal O_K/\mathbb Z}$ be the different of
$K$. Then
\begin{equation}\label{eq:cd}
\C_\theta\,\mathfrak D_K=(f'(\theta)).
\end{equation}
In particular,
$f'(\theta)\in\C_\theta .$
\end{proposition}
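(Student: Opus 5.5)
The plan is the classical Euler/Dedekind argument through the trace dual. Write $R=\mathbb Z[\theta]$ and introduce the complementary module
\[
R^{\vee}=\{x\in K:\operatorname{Tr}_{K/\mathbb Q}(xR)\subseteq\mathbb Z\}.
\]
The first key step is Euler's lemma: if $f(X)=(X-\theta)(b_0+b_1X+\dots+b_{n-1}X^{n-1})$ with $b_i\in\mathbb Z[\theta]$, then
\[
\operatorname{Tr}\Bigl(\frac{\theta^i}{f'(\theta)}\Bigr)=0 \ (0\le i\le n-2),\qquad \operatorname{Tr}\Bigl(\frac{\theta^{n-1}}{f'(\theta)}\Bigr)=1.
\]
I would prove this by partial fractions: expand $1/f(X)=\sum_{\sigma}\frac{1}{f'(\sigma\theta)(X-\sigma\theta)}$, multiply by $X^{i}$, and compare coefficients of the Laurent expansion at $\infty$. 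From these identities, the dual basis of $1,\theta,\dots,\theta^{n-1}$ is $b_0/f'(\theta),\dots,b_{n-1}/f'(\theta)$. Since the $b_i$ are monic-triangular combinations of powers of $\theta$, they span $R$, and hence
\[
R^{\vee}=f'(\theta)^{-1}R.
\]

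The second step is to relate $R^\vee$ to the conductor. By definition $\mathcal O_K^{\vee}=\mathfrak D_K^{-1}$. For $x\in K$ we have $x\mathcal O_K\subseteq R$ if and only if $x\mathcal O_K\subseteq R^{\vee\vee}=R$, and this holds if and only if $\operatorname{Tr}(xR^{\vee}\mathcal O_K)\subseteq\mathbb Z$. Using $R^\vee=f'(\theta)^{-1}R$, and noting that $R\mathcal O_K=\mathcal O_K$, this becomes $x f'(\theta)^{-1}\in\mathcal O_K^{\vee}=\mathfrak D_K^{-1}$, i.e.\ $x\in f'(\theta)\mathfrak D_K^{-1}$. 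Hence
\[
\C_\theta=f'(\theta)\mathfrak D_K^{-1},
\]
and multiplying by $\mathfrak D_K$ gives \eqref{eq:cd}. Double duality $R^{\vee\vee}=R$ holds because $R$ is a full lattice and the trace form is nondegenerate.

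Finally, $f'(\theta)\in\C_\theta$ follows because $\mathfrak D_K\supseteq\mathcal O_K$ is false; the correct direction is $\mathfrak D_K\subseteq\mathcal O_K$, so $\mathfrak D_K^{-1}\supseteq\mathcal O_K$. This gives $f'(\theta)\mathcal O_K\subseteq f'(\theta)\mathfrak D_K^{-1}=\C_\theta$.

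The main obstacle is the Euler trace identities and the triangularity of the $b_i$. The rest is formal duality, and I would cite \cite{Neukirch} for those details.
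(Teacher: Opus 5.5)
Your proof is correct and is precisely the classical Euler--duality argument (dual basis $b_j/f'(\theta)$, hence $\mathbb Z[\theta]^{\vee}=f'(\theta)^{-1}\mathbb Z[\theta]$, hence $\C_\theta=f'(\theta)\mathfrak D_K^{-1}$); the paper gives no proof of its own and simply cites Neukirch for this identity. Two cosmetic points: first, the dual-basis claim for $i>j$ needs the one-line reduction $\theta^i b_j=-\theta^{i-j-1}(a_0+a_1\theta+\dots+a_j\theta^j)$ (with $f=\sum_k a_kX^k$), whose exponents are at most $n-2$; second, your garbled last paragraph should simply read $(f'(\theta))=\C_\theta\mathfrak D_K\subseteq\C_\theta$, since $\mathfrak D_K\subseteq\mathcal O_K$.
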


The second result is the classical lower bound for the different in terms of
the ramification index 
(\cite[Chapter~III, Proposition~2.8]{Neukirch}).

\begin{proposition}\label{prop:different}
Let $\mathfrak m$ be a prime ideal of $\mathcal O_K$ lying above a
prime $p$, and suppose that
$p\mathcal O_K=\mathfrak m^{\,e}\mathfrak a,~\mathfrak m\nmid\mathfrak a.$
Then
$\mathfrak m^{\,e-1}\mid\mathfrak D_K.$
\end{proposition}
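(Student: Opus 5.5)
The statement to prove is Proposition~\ref{prop:different}: if $p\mathcal O_K=\mathfrak m^{e}\mathfrak a$ with $\mathfrak m\nmid\mathfrak a$, then $\mathfrak m^{e-1}\mid\mathfrak D_K$. I plan to use the trace characterization of the different. The inverse different $\mathfrak D_K^{-1}$ is the set of $x\in K$ with $\operatorname{Tr}_{K/\mathbb Q}(x\mathcal O_K)\subseteq\mathbb Z$. So $\mathfrak m^{e-1}\mid\mathfrak D_K$ is equivalent to $\mathfrak m^{-(e-1)}\subseteq\mathfrak D_K^{-1}$. In other words, it suffices to show that $\operatorname{Tr}_{K/\mathbb Q}(\mathfrak m^{-(e-1)})\subseteq\mathbb Z$.

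The key steps are as follows.
\begin{enumerate}
\item Let $\mathfrak b=\mathfrak m^{-(e-1)}$. Since $p\mathfrak m^{-(e-1)}=\mathfrak m\mathfrak a$, we have $\mathfrak b=p^{-1}\mathfrak m\mathfrak a$. So for $x\in\mathfrak b$, the element $y=px$ lies in $\mathfrak m\mathfrak a$, and this ideal is contained in every prime above $p$. This uses $\mathfrak m\nmid\mathfrak a$ only to identify the factorization; containment in $\mathfrak m$ comes from the explicit factor $\mathfrak m$.
\item It remains to show $\operatorname{Tr}(y)\in p\mathbb Z$ for every $y$ lying in all primes above $p$, i.e.\ in the radical $\sqrt{p\mathcal O_K}$. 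Such a $y$ is nilpotent in $\mathcal O_K/p\mathcal O_K$, since some power $y^N$ lies in $p\mathcal O_K$.
\item $\mathcal O_K$ is a free $\mathbb Z$-module of rank $n$. Hence $\operatorname{Tr}_{K/\mathbb Q}(y)\bmod p$ equals the trace of multiplication by $\bar y$ on the $n$-dimensional $\mathbb F_p$-vector space $\mathcal O_K/p\mathcal O_K$. A nilpotent endomorphism has trace $0$, so $p\mid\operatorname{Tr}(y)$. Then $\operatorname{Tr}(x)=\operatorname{Tr}(y)/p\in\mathbb Z$, as required.
\end{enumerate}

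The main obstacle is the bookkeeping in step~1: identifying $\mathfrak m^{-(e-1)}$ with $p^{-1}\mathfrak m\mathfrak a$, using $p\mathcal O_K=\mathfrak m^e\mathfrak a$ and invertibility of fractional ideals in the Dedekind domain $\mathcal O_K$. One also needs the standard fact that $\mathfrak D_K^{-1}$ is the largest fractional ideal whose trace lands in $\mathbb Z$, which is the definition of the different.
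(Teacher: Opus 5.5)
Your proof is correct. It takes a different route from the paper, which gives no argument of its own and only cites Neukirch.

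The standard proof behind that citation is local:
\begin{enumerate}
\item complete at $\mathfrak m$;
\item use multiplicativity of the different in towers to discard the maximal unramified subextension;
\item write the totally ramified part as $o[\pi]$, with $\pi$ a root of an Eisenstein polynomial $h$;
\item read off $v_{\mathfrak m}(h'(\pi))\ge e-1$ term by term from $\mathfrak D=(h'(\pi))$.
\end{enumerate}

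Your argument stays global and is more elementary. The trace characterization of $\mathfrak D_K^{-1}$ and the identity $\mathfrak m^{-(e-1)}=p^{-1}\mathfrak m\mathfrak a$ reduce everything to one fact: an element $y$ lying in every prime above $p$ acts nilpotently on $\mathcal O_K/p\mathcal O_K$, so $p\mid\operatorname{Tr}(y)$. Concretely, $y^{e}\in(\mathfrak m\mathfrak a)^{e}\subseteq\mathfrak m^{e}\mathfrak a=p\mathcal O_K$.

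This avoids completions, towers and Eisenstein polynomials. As you observe, it also never uses $\mathfrak m\nmid\mathfrak a$. What it does not recover is the sharper local result that the exponent of $\mathfrak m$ in $\mathfrak D_K$ is exactly $e-1$ when $p\nmid e$ and larger otherwise. The paper never needs that refinement: the proof of Theorem~\ref{thm:suff} uses only the divisibility $\mathfrak m^{e-1}\mid\mathfrak D_K$, which is exactly what you prove.
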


The following two lemmas will be used later in the proof of our main results.
\begin{lemma}\label{lem:pmax}
The order $\mathbb Z[\theta]$ is $p$-maximal if and only if $\C_\theta+p\mathcal O_K=\mathcal O_K.$
\end{lemma}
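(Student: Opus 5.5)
We will prove the two implications separately. Write $R=\mathbb Z[\theta]$ and $\C=\C_\theta$, and use two standard facts: $\mathcal O_K/R$ is a finite abelian group of order $I(\theta)$, and $\C$ is an ideal of $\mathcal O_K$ contained in $R$.

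Assume first that $\C+p\mathcal O_K=\mathcal O_K$. Then we may write $1=c+p\alpha$ with $c\in\C$ and $\alpha\in\mathcal O_K$. Take any $x\in\mathcal O_K$ with $px\in R$. We have $x=cx+p\alpha x$. The term $cx$ lies in $\C\subseteq R$, since $c\in\C$ means $c\mathcal O_K\subseteq R$. For the second term, $p\alpha x=\alpha(px)$, and $\C$ is an ideal, so
\[
p\alpha x=(1-c)x=x-cx .
\]
This is circular, so we argue instead with the group $\mathcal O_K/R$. Multiplication by $c$ kills it, because $c\mathcal O_K\subseteq R$. Hence multiplication by $p\alpha=1-c$ acts as the identity on $\mathcal O_K/R$. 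Now let $\bar x$ be an element of $p$-power order, say $p^k\bar x=0$. Then $\bar x=(p\alpha)^k\bar x=\alpha^k(p^k\bar x)=0$. This step is legitimate because $\mathcal O_K/R$ is an $R$-module, and multiplication by $\alpha\in\mathcal O_K$ only needs to be well defined on the image of $p^kx\in R$. More precisely, $(p\alpha)^kx=\alpha^k(p^kx)$ with $p^kx\in R$, but $\alpha^k(p^kx)$ need not lie in $R$. I expect this to be the delicate point.

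To avoid the problem, we use $\C$ directly. Iterating $1=c+p\alpha$ gives
\[
1=(c+p\alpha)^k=c'+p^k\alpha^k, \qquad c'\in\C .
\]
Then $x=c'x+\alpha^k(p^kx)$. Here $c'x\in R$, while $\alpha^k(p^kx)$ lies in $p^k\mathcal O_K$, which is not obviously in $R$. The fix is to take $k$ with $p^k\mathcal O_K\subseteq R$. Such $k$ exists whenever $I(\theta)$ is a power of $p$ times a unit locally: after localizing at $(p)$, $R_{(p)}\supseteq p^{v}\mathcal O_{K,(p)}$ with $v=v_p(I(\theta))$. So we localize, working in $\mathcal O_{K,(p)}$ and $R_{(p)}$. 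There $p^v\mathcal O_{K,(p)}\subseteq R_{(p)}$, and $1=c'+p^v\beta$ with $c'\in\C$ and $\beta\in\mathcal O_K$. For any $x\in\mathcal O_{K,(p)}$ this gives $x=c'x+\beta p^vx$. The first term lies in $R_{(p)}$. The second also lies in $R_{(p)}$, since $\beta p^vx\in p^v\mathcal O_{K,(p)}\subseteq R_{(p)}$. Hence $R_{(p)}=\mathcal O_{K,(p)}$, which is exactly $p$-maximality.

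Conversely, suppose $p\nmid I(\theta)$ and set $N=I(\theta)$. Since $N$ annihilates $\mathcal O_K/R$, we have $N\mathcal O_K\subseteq R$, and $N\mathcal O_K$ is an ideal of $\mathcal O_K$. Therefore $N\in\C$. Because $\gcd(N,p)=1$, there are integers $u,v$ with $uN+vp=1$, and this gives $1\in\C+p\mathcal O_K$.

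The main obstacle is the first direction: one must not multiply elements of $R$ by arbitrary elements of $\mathcal O_K$. Localizing at $p$ and using $p^v\mathcal O_{K,(p)}\subseteq R_{(p)}$ resolves this.
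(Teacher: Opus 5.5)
Your proof is correct. The converse direction ($p\nmid I(\theta)\Rightarrow N\in\C_\theta$, then B\'ezout) is exactly the paper's.

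For the forward direction you take a more roundabout route than the paper:
\begin{itemize}
\item You iterate $1=c+p\alpha$ to get $1=c'+p^{v}\beta$ with $v=v_p(I(\theta))$.
\item You use $I(\theta)\mathcal O_K\subseteq\mathbb Z[\theta]$ to obtain $p^{v}\mathcal O_{K,(p)}\subseteq\mathbb Z[\theta]_{(p)}$.
\item You conclude $\mathbb Z[\theta]_{(p)}=\mathcal O_{K,(p)}$ and identify this with $p$-maximality. That identification is standard (localizing the finite group $G=\mathcal O_K/\mathbb Z[\theta]$ at $(p)$ gives its $p$-primary part), and the paper asserts it in the preliminaries.
\end{itemize}

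The paper instead keeps the step you abandoned as ``circular''. From $x=cx+p(\alpha x)$ for every $x\in\mathcal O_K$, it reads off $\mathcal O_K=\mathbb Z[\theta]+p\mathcal O_K$, i.e.\ $G=pG$. It then uses only the fact that a finite abelian group with $G=pG$ has order prime to $p$.

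Both arguments rest on the same input, the finiteness of $G$; in your version this appears as the exponent $v$ that kills the $p$-part. The paper's argument needs no localization and no iteration. Yours has the mild advantage of landing directly on the local equality $\mathbb Z[\theta]_{(p)}=\mathcal O_{K,(p)}$.

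Your closing diagnosis of the ``main obstacle'' is therefore off. You never need to show directly that a $p$-torsion class in $G$ vanishes, so the issue of multiplying elements of $\mathbb Z[\theta]$ by arbitrary elements of $\mathcal O_K$ never arises. In a final write-up, delete the abandoned attempts. Keep either the localization argument alone or replace it with the $G=pG$ argument.
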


\begin{proof}
Let $G=\mathcal O_K/\mathbb Z[\theta]$ be a finite abelian group, and let
$N=|G|$.

Firstly, suppose that
\(
\C_\theta+p\mathcal O_K=\mathcal O_K.
\)
So, there exist $c\in\C_\theta$ and $u\in\mathcal O_K$ such that
$1=c+pu$. For every $x\in\mathcal O_K$, we have $x=cx+pux,$
where $cx\in\mathbb Z[\theta]$. Hence,
$\mathcal O_K=\mathbb Z[\theta]+p\mathcal O_K,$
and hence $G=pG$. Since $G$ is finite, its $p$-primary component is trivial, and
therefore $p\nmid N$.

Now, suppose that $p\nmid N$. Since $NG=0$, we have
$N\mathcal O_K\subseteq\mathbb Z[\theta],$
so $N\in\C_\theta$. By B\'ezout's identity, there exist
$a,b\in\mathbb Z$ such that
$aN+bp=1.$
Hence
$1\in\C_\theta+p\mathcal O_K,$
which completes the proof.
\end{proof}

\begin{lemma}\label{lem:intmem}
Let $\mathfrak m$ be a maximal ideal of $\mathcal O_K$ lying above a prime $p$. Then, for every $t\in\mathbb Z$,
\[
t\in\mathfrak m
\iff
p\mid t.
\]
\end{lemma}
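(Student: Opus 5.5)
Both directions rest on two facts: $\mathfrak m$ lies above $p$, which means $\mathfrak m\cap\mathbb Z=p\mathbb Z$, and $p\mathbb Z$ is a maximal ideal of $\mathbb Z$.

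First we check that $\mathfrak m\cap\mathbb Z=p\mathbb Z$. Consider the contraction $\mathfrak m\cap\mathbb Z$.
\begin{itemize}
\item It is a prime ideal of $\mathbb Z$, being the preimage of the prime ideal $\mathfrak m$ under the inclusion $\mathbb Z\hookrightarrow\mathcal O_K$.
\item It contains $p$: since $\mathfrak m$ lies above $p$, we have $p\mathcal O_K\subseteq\mathfrak m$, so $p\in\mathfrak m\cap\mathbb Z$.
\item It is proper, because $1\notin\mathfrak m$ ($\mathfrak m$ is a proper ideal).
\end{itemize}
So $\mathfrak m\cap\mathbb Z$ is a proper ideal of $\mathbb Z$ containing the maximal ideal $p\mathbb Z$. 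Hence $\mathfrak m\cap\mathbb Z=p\mathbb Z$.

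The direction ($\Leftarrow$) is now immediate: if $p\mid t$, then $t\in p\mathbb Z\subseteq p\mathcal O_K\subseteq\mathfrak m$.

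For ($\Rightarrow$) we argue directly. Suppose $t\in\mathfrak m$ but $p\nmid t$. Since $p$ is prime, $\gcd(t,p)=1$. By B\'ezout there are $a,b\in\mathbb Z$ with $at+bp=1$. Both $at$ and $bp$ lie in $\mathfrak m$, so $1\in\mathfrak m$, contradicting that $\mathfrak m$ is proper. This argument uses the B\'ezout device already employed in Lemma~\ref{lem:pmax}, and it is equivalent to the contraction computation above.

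No step here is a genuine obstacle. The only point needing care is to state precisely what \emph{lying above $p$} means, namely $p\in\mathfrak m$ (equivalently $\mathfrak m\mid p\mathcal O_K$), and to use that $\mathfrak m\neq\mathcal O_K$.
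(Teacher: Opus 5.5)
Your proof is correct and follows the paper's argument: both show that $\mathfrak m\cap\mathbb Z$ is a proper (prime) ideal of $\mathbb Z$ containing the maximal ideal $p\mathbb Z$, hence equal to it, and both directions follow from this. Your separate B\'ezout argument for ($\Rightarrow$) is valid but redundant, since the contraction computation already gives it.
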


\begin{proof}
Since $\mathfrak m\cap\mathbb Z$
is a non-zero prime ideal of $\mathbb Z$ containing $p$, it follows that
$\mathfrak m\cap\mathbb Z=p\mathbb Z,$
which proves the claim.
\end{proof}

The following result is well known (\cite[Chapter~III]{Neukirch}).

\begin{proposition}\label{prop:eis}
If the minimal polynomial $f$ of $\theta$ is Eisenstein at a prime $p$, then
$\mathbb Z[\theta]$ is $p$-maximal.
\end{proposition}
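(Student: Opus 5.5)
The plan is to use Lemma~\ref{lem:pmax}, so it suffices to show that $\C_\theta+p\OK=\OK$. Suppose not. Then $\C_\theta+p\OK$ is a proper ideal of $\OK$ and lies in some maximal ideal $\m$. Since it contains $p$, we have $p\in\m$, so $\m$ lies above $p$ and $\C_\theta\subseteq\m$. The aim is to derive a contradiction by comparing $\m$-adic valuations in the conductor--different identity of Proposition~\ref{prop:conductor-different}.

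The key steps are as follows. First, since $f$ is Eisenstein at $p$, we have $f(X)\equiv X^n \pmod p$, so $\theta^n\in p\OK$. Write $f(X)=X^n+a_{n-1}X^{n-1}+\dots+a_0$ with $p\mid a_i$ and $p^2\nmid a_0$. Taking $\m$-adic valuations in $\theta^n=-(a_{n-1}\theta^{n-1}+\dots+a_0)$ gives the standard computation: if $e=\vm(p)$, then $\vm(\theta)>0$, and the term $a_0$ has valuation exactly $e$ while every other term has larger valuation. Hence $n\,\vm(\theta)=e$. Since $e\le n$, this forces $e=n$ and $\vm(\theta)=1$. Therefore $p\OK=\m^n$, so $\m$ is the unique prime above $p$, it is totally ramified, and $\theta$ is a uniformizer at $\m$.

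Second, I bound $\vm(f'(\theta))$ from above. We have $f'(\theta)=n\theta^{n-1}+\sum_{i<n} i a_i\theta^{i-1}$. For $1\le i\le n-1$ the term $ia_i\theta^{i-1}$ has valuation at least $n+i-1$. The leading term $n\theta^{n-1}$ has valuation $n\,v_p(n)+n-1$. These valuations are all distinct modulo $n$, since they are $\equiv i-1$ or $-1 \pmod n$ for distinct $i$. So the valuation of the sum is the minimum of them, which is at most $n\,v_p(n)+n-1$.

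Third, I combine this with the different. By Proposition~\ref{prop:different}, $\vm(\mathfrak D_K)\ge n-1$, and the precise value is the minimum above, because $\OK$ locally equals $\Z_p[\theta]$ once we know $\theta$ is a uniformizer. This last fact is exactly what is being proved, so it cannot be used here. The cleanest route is to avoid the different altogether and argue directly. Since $\theta$ is a uniformizer of the totally ramified prime $\m$, the elements $1,\theta,\dots,\theta^{n-1}$ have distinct valuations $0,\dots,n-1$ modulo $n$. So if $x=\sum c_i\theta^i$ with $c_i\in\Q$ lies in $\OK$, then $\vm(x)=\min_i(n\,v_p(c_i)+i)\ge0$, which forces $v_p(c_i)\ge 0$ for all $i$. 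Thus $p$ does not divide the denominators of any element of $\OK$ written in the basis $1,\theta,\dots,\theta^{n-1}$. Now suppose $p\mid I(\theta)$. Then there is some $x\in\OK$ with $px\in\Z[\theta]$ and $x\notin\Z[\theta]$. Its coordinates $c_i$ lie in $\frac1p\Z$ and not all are integers, which contradicts $v_p(c_i)\ge0$. Hence $p\nmid I(\theta)$, and this in turn gives $\C_\theta\not\subseteq\m$, which is consistent with Lemma~\ref{lem:pmax}.

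The main obstacle is the first step: deriving total ramification and the uniformizer property from the Eisenstein condition with the correct valuation bookkeeping. In particular, the inequality $e\le n$ is needed, and it follows from the fundamental identity $\sum e_if_i=n$. After that, the distinct-residues argument settles both the valuation of sums and integrality, and the conductor/different route becomes optional.
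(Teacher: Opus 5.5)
Your final argument is correct. Note that the paper does not prove this proposition; it quotes it as a standard fact from Neukirch, so there is no in-paper proof to match. What you give is essentially the textbook argument, carried out globally rather than over $\Z_p$:
\begin{enumerate}
\item[(a)] the Eisenstein condition forces $p\OK=\m^n$ and $\vm(\theta)=1$;
\item[(b)] the distinct residues of $n\,v_p(c_i)+i$ modulo $n$ show that every $x=\sum c_i\theta^i\in\OK$ has $v_p(c_i)\ge 0$;
\item[(c)] hence $\OK/\Z[\theta]$ has no element of order $p$.
\end{enumerate}

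Two remarks on the write-up. First, the opening reduction via Lemma~\ref{lem:pmax} and your second step are not used by the argument that actually closes the proof, so you can drop them. Your last paragraph proves $p\nmid I(\theta)$ directly, which is the definition of $p$-maximality.

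Second, your diagnosis of the conductor--different route is accurate, and it matters given the paper's framework. When $p\nmid n$, your second step gives $\vm(f'(\theta))=n-1\le\vm(\mathfrak D_K)$ by Proposition~\ref{prop:different}. Hence $\m\nmid\C_\theta$, and that route does close. When $p\mid n$, however, one has $\vm(f'(\theta))\ge n$, while Proposition~\ref{prop:different} only gives $\vm(\mathfrak D_K)\ge n-1$. Since $\m$ is the only prime above $p$, the needed equality $\vm(\mathfrak D_K)=\vm(f'(\theta))$ is equivalent to the $p$-maximality you are proving. Your direct valuation argument avoids this circularity and treats the wild case uniformly.

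A shorter alternative, if you want it, is the norm argument, which bypasses both the ramification computation and $\sum e_if_i=n$:
\begin{enumerate}
\item[(a)] Suppose $\frac1p\sum c_i\theta^i\in\OK$, and let $j$ be minimal with $p\nmid c_j$.
\item[(b)] Drop the terms with $i<j$, which lie in $\Z[\theta]$. Multiply by $\theta^{n-1-j}$ and use $\theta^n/p\in\Z[\theta]$; this gives $c_j\theta^{n-1}/p\in\OK$.
\item[(c)] The norm of this element is $\pm c_j^n a_0^{n-1}/p^n$, which has $p$-adic valuation $-1$. This contradicts its being an algebraic integer.
\end{enumerate}
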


\section{Local Criteria for \texorpdfstring{$p$}{p}-Maximality at a Double Root Modulo \texorpdfstring{$p$}{p}}
In this section, we establish the main theoretical results of the paper.
Building on the preliminary results of the previous section, we develop a
general criterion for deciding the $p$-maximality of the order
$\mathbb Z[\theta]$. The criterion is formulated for an arbitrary monic
irreducible polynomial and serves as the basis for all applications in the
subsequent sections.\\

Fix a prime $p$ and an integer $r$. Consider the second order Taylor
decomposition of $f$ at $r$. There exists $g \in \Z[X]$ with
\begin{equation}\label{eq:taylor2}
  f \ =\ f(r) \ +\ f'(r)\,(X - r) \ +\ (X-r)^2 g,
\end{equation}
and $g$ is monic of degree $n-2$ with
\begin{equation}\label{eq:gr}
  2\,g(r) \ =\ f''(r).
\end{equation}

Evaluating~\eqref{eq:taylor2} at $\theta$ and writing
$ \pi = \theta - r, ~S = g(\theta),$
we obtain the {following relation} in $\OK$:
\begin{equation}\label{eq:fund}
  \pi^2 S \ =\ -\bigl(f(r) + f'(r)\,\pi\bigr).
\end{equation}

\begin{theorem}\label{thm:suff}
Let $f\in\mathbb Z[X]$ be a monic irreducible polynomial, and let
$\theta$ be a root of $f$. Let $r \in \Z$ satisfy
\begin{enumerate}
\item[(i)] $p^2 \nmid f(r)$;
\item[(ii)] $p \nmid f''(r)$;
\item[(iii)] every maximal ideal $\m$ of $\OK$ with $\C_\theta \subseteq \m$ and $p \in \m$
contains $\theta - r$.
\end{enumerate}
Then $\Z[\theta]$ is $p$-maximal.
\end{theorem}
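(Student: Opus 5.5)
By Lemma~\ref{lem:pmax}, it suffices to show $\C_\theta+p\OK=\OK$. I will argue by contradiction: suppose some maximal ideal $\m$ of $\OK$ contains both $\C_\theta$ and $p$. By hypothesis (iii), $\pi=\theta-r\in\m$. I will write $\vm$ for the $\m$-adic valuation and $e=\vm(p)$ for the ramification index, and set $a=\vm(\pi)\ge 1$. The plan is to compute the valuations of both sides of \eqref{eq:fund}, and of $f'(\theta)$, and to contradict Propositions~\ref{prop:conductor-different} and~\ref{prop:different}.

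\textbf{Preliminary reductions.}
\begin{itemize}
\item Hypothesis (ii) forces $p$ odd. Indeed, $f''(r)=2g(r)$ is even, so $p=2$ would give $p\mid f''(r)$.
\item It follows that $p\nmid g(r)$. Since $g(\theta)-g(r)\in\pi\OK\subseteq\m$ and $g(r)\notin\m$ by Lemma~\ref{lem:intmem}, we get $S=g(\theta)\notin\m$. Hence $\vm(S)=0$.
\item We have $p\mid f(r)$. Otherwise $f(r)\notin\m$ by Lemma~\ref{lem:intmem}, but \eqref{eq:fund} gives $f(r)=-\pi(f'(r)+\pi S)\in\m$. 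Combined with (i), $v_p(f(r))=1$, so $\vm(f(r))=e$.
\item We have $p\mid f'(r)$. By Proposition~\ref{prop:conductor-different}, $f'(\theta)\in\C_\theta\subseteq\m$. Since $f'(\theta)\equiv f'(r)\pmod{\pi}$ and $\pi\in\m$, we get $f'(r)\in\m$, and Lemma~\ref{lem:intmem} gives $p\mid f'(r)$. Thus $\vm(f'(r))\ge e$.
\end{itemize}

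\textbf{Valuation of $\pi$.} In \eqref{eq:fund} the left side has valuation $2a$. On the right, $\vm(f(r))=e$ while $\vm(f'(r)\pi)\ge e+a>e$. So the right side has valuation exactly $e$, and therefore $e=2a$.

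\textbf{Contradiction.} Differentiating \eqref{eq:taylor2} and evaluating at $\theta$ gives
\[
f'(\theta)=f'(r)+2\pi S+\pi^2 g'(\theta).
\]
Since $p$ is odd and $S$ is an $\m$-unit, $\vm(2\pi S)=a$. The other two terms have valuation at least $e=2a>a$ and at least $2a>a$ respectively. Hence $\vm(f'(\theta))=a$. On the other hand, Proposition~\ref{prop:conductor-different} gives $\vm(f'(\theta))=\vm(\C_\theta)+\vm(\mathfrak D_K)$. Here $\vm(\C_\theta)\ge1$ because $\C_\theta\subseteq\m$, and $\vm(\mathfrak D_K)\ge e-1$ by Proposition~\ref{prop:different}. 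So $\vm(f'(\theta))\ge e=2a>a$, a contradiction. Therefore no such $\m$ exists, and $\Z[\theta]$ is $p$-maximal.

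The step I expect to need the most care is this valuation bookkeeping. The derivation of $e=2a$ from \eqref{eq:fund} needs the bound $\vm(f'(r))\ge e$ obtained from $f'(\theta)\in\C_\theta$. The final step needs $p$ odd, which comes only implicitly from (ii).
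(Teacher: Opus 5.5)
Your proof is correct and follows essentially the same route as the paper. Both arguments take a maximal ideal $\mathfrak m\supseteq\C_\theta+p\OK$, use (iii) and $f'(\theta)\in\C_\theta$ to get $p\mid f(r)$ and $p\mid f'(r)$, read off $e=2v_{\mathfrak m}(\theta-r)$ from \eqref{eq:fund}, and contradict the conductor--different bound $v_{\mathfrak m}(f'(\theta))\ge e$. The only cosmetic difference is that you expand $f'(\theta)=f'(r)+2\pi S+\pi^2 g'(\theta)$ and use that (ii) forces $p$ to be odd, whereas the paper writes $f'(\theta)=f'(r)+\pi T$ with $T=g_1(\theta)$ and $g_1(r)=f''(r)$, which makes $T$ an $\mathfrak m$-unit directly.
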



\begin{proof}
Suppose that $\Z[\theta]$ is not $p$-maximal. By Lemma~\ref{lem:pmax},
\[
\C_\theta+p\,\OK\neq\OK,
\]
so there exists a maximal ideal $\m$ of $\OK$ containing
$\C_\theta+p\,\OK$. By hypothesis~$\it{(iii)}$,
$\pi:=\theta-r\in\m.$

Consider the first-order Taylor expansion of $f'$ at $r$, i.e.,
\begin{equation*}\label{eq:taylor1}
f'=f'(r)+(X-r)g_1,\qquad g_1(r)=f''(r),
\end{equation*}
and set
$S=g(\theta),~ T=g_1(\theta).$
Then \eqref{eq:fund} holds and
\begin{equation}\label{eq:fprime}
f'(\theta)=f'(r)+\pi T.
\end{equation}

Since $f'(\theta)\in\C_\theta\subseteq\m$ and $\pi\in\m$,
identity~\eqref{eq:fprime} implies that $f'(r)\in\m$. Hence
$p\mid f'(r)$ by Lemma~\ref{lem:intmem}. Equation~\eqref{eq:fund}
then shows that $f(r)\in\m$, and therefore $p\mid f(r)$. Write
$f(r)=pt,\text{ where }p\nmid t, \text{ and }
f'(r)=pk.$

We next show that $S,T\notin\m$. Indeed, for every
$q\in\Z[X]$, we have 
$q(\theta)-q(r)\in(\theta-r)\subseteq\m.$
Thus $q(\theta)\in\m$ would imply $q(r)\in\m$, or equivalently,
$p\mid q(r)$ by Lemma~\ref{lem:intmem}. Since
\[
g_1(r)=f''(r),\qquad 2g(r)=f''(r),
\]
hypothesis~{\it{(ii)}} yields
\[
p\nmid g(r)\qquad\text{and}\qquad p\nmid g_1(r),
\]
and consequently
$S,T\notin\m.$ Now set
$\sigma=t+k\pi.$
Then \eqref{eq:fund} becomes
\begin{equation}\label{eq:idideal}
\pi^2S=-p\sigma,
\qquad\text{or equivalently}\qquad
(\pi)^2(S)=(p)(\sigma)
\end{equation}
as ideals of $\OK$. Furthermore, $\sigma\notin\m$. 
Also,
$\pi\neq0$ because $\deg f\ge2$. Let
\[
w=\vm((\pi)),\qquad
e=\vm((p)).
\] Since
$\vm((S))=\vm((\sigma))=0$, taking $\m$-adic valuations
in~\eqref{eq:idideal}, we have  $2w=e.$ We claim that
$f'(\theta)\notin\m^{\,w+1}.$
By~\eqref{eq:fprime},
$\pi T=f'(\theta)-pk.$
Since
$p\in\m^{\,e}
=\m^{2w}
\subseteq\m^{\,w+1},$
the assumption $f'(\theta)\in\m^{\,w+1}$ would imply
$\pi T\in\m^{\,w+1}$. On the other hand,
$\vm((\pi T))
=\vm((\pi))+\vm((T))
=w,$
because $T\notin\m$. This contradicts
$\pi T\in\m^{\,w+1}$.

Finally, since $\m^{\,e}\mid p\,\OK,$
Proposition~\ref{prop:different} gives
$\m^{\,e-1}\mid\dd_K.$
Moreover,
$\m\mid\C_\theta$
by construction. Hence, using the conductor--different
identity~\eqref{eq:cd},
\[
\m^{\,e}
=\m^{\,1+(e-1)}
\mid
\C_\theta\dd_K
=(f'(\theta)).
\]
Therefore
$f'(\theta)\in\m^{\,e}
=\m^{2w}
\subseteq\m^{\,w+1},$
which is a contradiction. Thus $\Z[\theta]$ is $p$-maximal.

\end{proof}

\begin{theorem}\label{thm:nec}
Let $f\in\mathbb Z[X]$ be a monic irreducible polynomial, and let
$\theta$ be a root of $f$. 
Suppose $p^2 \mid f(r)$ and $p \mid f'(r)$. Then $p \mid I[\theta]$; in particular
$\Z[\theta] \ne \OK$.
\end{theorem}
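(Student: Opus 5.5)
The plan is to exhibit an explicit element of $\OK\setminus\Z[\theta]$ whose $p$-multiple lies in $\Z[\theta]$. This forces the quotient $\OK/\Z[\theta]$ to have nontrivial $p$-torsion, so $p\mid I(\theta)$. The natural candidate comes from the Taylor relation \eqref{eq:fund}. With $\pi=\theta-r$ and $S=g(\theta)$, we have
\[
\pi^2 S=-\bigl(f(r)+f'(r)\pi\bigr).
\]
Under the hypotheses $p^2\mid f(r)$ and $p\mid f'(r)$, write $f(r)=p^2a$ and $f'(r)=pb$ with $a,b\in\Z$. Then $(\pi S/p)^2$ should be integral. I will take the candidate
\[
\alpha=\frac{\pi\, g(\theta)}{p}=\frac{(\theta-r)g(\theta)}{p}.
\]

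The first step is to show that $\alpha\in\OK$. From \eqref{eq:fund},
\[
\alpha^2=\frac{\pi^2S^2}{p^2}=\frac{-(p^2a+pb\pi)S}{p^2}=-aS-\frac{b\pi S}{p}=-aS-b\alpha .
\]
Hence $\alpha^2+b\alpha+aS=0$. Since $aS\in\Z[\theta]\subseteq\OK$, the element $\alpha$ is a root of a monic polynomial over $\OK$. Because $\OK$ is integrally closed, $\alpha\in\OK$.

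The second step is to show that $\alpha\notin\Z[\theta]$. The polynomial $(X-r)g(X)$ is monic of degree $n-1$, since $g$ is monic of degree $n-2$. Because $1,\theta,\dots,\theta^{n-1}$ is a $\Z$-basis of $\Z[\theta]$ (as $f$ is irreducible of degree $n$), $p\alpha$ has coordinate $1$ on $\theta^{n-1}$. So $\alpha$ has coordinate $1/p\notin\Z$ there, and therefore $\alpha\notin\Z[\theta]$. On the other hand $p\alpha=(\theta-r)g(\theta)\in\Z[\theta]$. Thus the class of $\alpha$ in $\OK/\Z[\theta]$ is a nonzero element of order $p$, and $p$ divides the order $I(\theta)$ of this group by Lagrange's theorem. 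In particular $\Z[\theta]\neq\OK$.

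The only step requiring care is the integrality computation: checking that dividing by $p^2$ is legitimate and that $b\pi S/p$ is exactly $b\alpha$. The degree bookkeeping also needs $n\ge2$, so that $g$ is defined and $(X-r)g$ has degree $n-1<n$; this holds since $\deg f\ge 2$ throughout the paper.
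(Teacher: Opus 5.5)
Your proposal is correct and follows the paper's proof essentially verbatim. Both take $z=(\theta-r)g(\theta)/p$ and show it satisfies the monic quadratic $z^2+kz+tS=0$, so $z\in\OK$. Both then use that $(X-r)g$ is monic of degree $n-1<n$ to conclude $z\notin\Z[\theta]$ while $pz\in\Z[\theta]$, which gives an element of order $p$ in $\OK/\Z[\theta]$.
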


\begin{proof}
Write $f(r) = p^2 t,~f'(r) = pk$, and set
$y =\pi S = (\theta - r)\,~g(\theta) \ \in\ \Z[\theta].$
Multiplying~\eqref{eq:fund} by $S$, we have $y^2 = -f(r)S - f'(r)y$, i.e.
\begin{equation*}\label{eq:quad}
  y^2 + f'(r)\,y + f(r)\,S = 0,
  \qquad\text{ and hence}\qquad
  \Bigl(\frac{y}{p}\Bigr)^{\!2} + k\Bigl(\frac{y}{p}\Bigr) + t\,S \ =\ 0.
\end{equation*}
Thus $z := y/p \in K$ satisfies a {monic} quadratic polynomial over $\OK$, so $z \in \OK$.

We claim $z \notin \Z[\theta]$. Suppose $z = c(\theta)$ for some $c \in \Z[X]$; without loss of generality, we may assume $\deg c < n$. Consider
$ W \ =\ (X - r)\,g \ \in\ \Z[X],$
which by~\eqref{eq:taylor2} is a monic polynomial of degree $n-1$, and satisfies $W(\theta) = y = p\,c(\theta)$.
Then $W - p\,c$ vanishes at $\theta$ and has degree $< n = \deg f$, so $W = p\,c$ identically.
Comparing leading coefficients, we get a contradiction. Thus $z \notin \mathbb Z[\theta]$ but $pz\in\mathbb Z[\theta]$. Therefore the class of $z$ in $\mathcal O_K/\mathbb Z[\theta]$ has order $p$, and hence, the proof is complete.
\end{proof}

\begin{remark}
    The search of a suitable integer \(r\) satisfying the hypotheses of  Theorem~\ref{thm:nec} can  be restricted to values of $r$ such that $0\le r<p^2$  as both $f(r)\bmod p^2$ and $f'(r)\bmod p$ depend only on
$r\bmod p^2$.

\end{remark}

\begin{remark}
The converse of Theorem~\ref{thm:nec} is not true. 
For example, consider $
f(X)=X^4+2X^3+5X^2+4X+7.
$ Here $f$ does not satisfy the hypotheses of Theorem 3.2, but it is non-monogenic.

\end{remark}
The following theorem is a generalized version of Theorem~\ref{thm:nec}.
\begin{theorem}\label{thmm:necfactor}
Let $f\in\Z[X]$ be monic irreducible polynomial of degree $n$ with a root $\theta$, and let $p$ be a
prime. Let $\bar\varphi$ be a repeated irreducible factor of $\bar f$ in
$\F_p[X]$ of degree $d$, let $\varphi\in\Z[X]$ be a monic lift of $\bar{\varphi}$, and  $\alpha$ be a root of
$\varphi$. Suppose that $f=\sum_k a_k\varphi^{\,k}$  where $\deg a_k<d$  is the $\varphi$-adic expansion of $f$.
 If
$p^2\mid a_0 ,$ then $p\mid I(\theta)$; in particular $f$ is non-monogenic.
\end{theorem}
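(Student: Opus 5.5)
The plan is to follow the proof of Theorem~\ref{thm:nec}, with the linear polynomial $X-r$ replaced by the lift $\varphi$. The root $\alpha$ of $\varphi$ should not be needed. From the $\varphi$-adic expansion, collect the terms with $k\ge 2$ and write
\[
f=a_0+a_1\varphi+\varphi^{2}G,\qquad G=\sum_{k\ge 2}a_k\varphi^{\,k-2}\in\Z[X].
\]
Since $f$ and $\varphi$ are monic and $\deg a_k<d$, the top coefficient $a_N$ (where $N=\lfloor n/d\rfloor$) is the constant $1$. Hence $G$ is monic of degree $n-2d$, and $W:=\varphi G$ is monic of degree $n-d<n$.

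The first step is to show that $p\mid a_1$, that is, every coefficient of $a_1$ is divisible by $p$. Reduce modulo $p$: since $\bar\varphi^{2}\mid\bar f$ and $p^2\mid a_0$, we get $\bar\varphi^{2}\mid \bar a_1\bar\varphi$. This gives $\bar\varphi\mid\bar a_1$. But $\deg\bar a_1<d=\deg\bar\varphi$, so $\bar a_1=0$. (The same reasoning would already give $\bar a_0=0$; the hypothesis strengthens this to $p^2\mid a_0$.) Accordingly, write $a_0=p^2t$ and $a_1=pk$ with $t,k\in\Z[X]$.

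Next, evaluate at $\theta$ and set $\Phi=\varphi(\theta)$, $S=G(\theta)$, and $y=\Phi S=W(\theta)\in\Z[\theta]$. From $f(\theta)=0$ we get $\Phi^2S=-(a_0(\theta)+a_1(\theta)\Phi)$. Multiplying by $S$ gives
\[
y^2+a_1(\theta)\,y+a_0(\theta)S=0,
\]
and dividing by $p^2$ gives
\[
z^2+k(\theta)z+t(\theta)S=0,\qquad z=y/p.
\]
This is a monic equation for $z$ with coefficients in $\Z[\theta]\subseteq\OK$, so $z\in\OK$.

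Finally, suppose $z\in\Z[\theta]$, say $z=c(\theta)$ with $\deg c<n$. Then $W-pc$ vanishes at $\theta$ and has degree $<n$, so $W=pc$ identically. This is impossible because $W$ is monic. Hence $z\notin\Z[\theta]$ while $pz\in\Z[\theta]$, so the class of $z$ in $\OK/\Z[\theta]$ has order $p$, and therefore $p\mid I(\theta)$.

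The only genuinely new point compared with Theorem~\ref{thm:nec} is the first step. There, the analogous condition $p\mid f'(r)$ was a hypothesis; here it must be derived from the repeated factor, using the degree bound $\deg a_1<d$. The rest is a direct transcription of the earlier argument.
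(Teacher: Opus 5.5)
Your proof is correct and follows the paper's argument essentially line by line: the same $G=\sum_{k\ge2}a_k\varphi^{k-2}$, the same $W=\varphi G$ and $z=W(\theta)/p$, the same monic quadratic showing $z\in\OK$, and monicity of $W$ to rule out $z\in\Z[\theta]$. You also usefully supply the justification of $p\mid a_1$, which the paper leaves as ``easy to check.'' One small slip: $a_N$ is monic of degree $n-Nd$, and is not the constant $1$ unless $d\mid n$. Monicity of $a_N$ is all you need, though, for $G$ and $W$ to be monic of degrees $n-2d$ and $n-d$, so the argument is unaffected.
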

\begin{proof} It is easy to check that  $p\mid a_1$ and $p\mid f'(\alpha)$. 
Write $a_0=p^2t$, $a_1=pk$ with $t,k\in\Z[X]$, and $G=\sum_{k\ge2}a_k\varphi^{\,k-2}$. Then
\begin{equation}\label{eq:obstruction}
f=\varphi^2G+p\,k\varphi+p^2t .
\end{equation}
Set $W=\varphi G$, which is monic of degree $n-d$, and
\[
y=W(\theta)=\varphi(\theta)G(\theta)\in\Z[\theta],
\qquad
z=\frac{y}{p}\in K=\Q(\theta).
\]
We show that $z\in\OK\setminus\Z[\theta]$. Since $pz=y\in\Z[\theta]$, the
class of $z$ is an element of order $p$ of the finite group
$\OK/\Z[\theta]$, and thus $p\mid I(\theta)$. Evaluating \eqref{eq:obstruction} at
$\theta$ and using $f(\theta)=0$, we have
$\varphi(\theta)^2G(\theta)=-p\bigl(k(\theta)\varphi(\theta)+p\,t(\theta)\bigr)$.
Multiplying by $G(\theta)$, and then dividing by $p^2$, we get
$z^2+k(\theta)\,z+t(\theta)G(\theta)=0 .$
Thus $z\in\OK$.

It remains to prove that $z\not\in \Z[\theta]$. The set $\{1,\theta,\dots,\theta^{\,n-1}\}$ form
a $\Z$-basis of $\Z[\theta]$, and $\deg W<n$, so the coefficients of $W$
are the coordinates of $y$ in that basis.  If $z=y/p\in \Z[\theta]$,
all of them would be divisible by $p$. So $z\not\in \Z[\theta]$ because $W$ is monic.
\end{proof}

\section{Applications to Lenny Jones' Conjecture}
The following theorem was established by Kaur and Kumar \cite{KK}. We shall derive it as an application of the local criteria developed in the previous section.
\begin{theorem}\label{th1}
Let $A,B\in\Z$ with $B\neq0$, and let $n,m$ be positive integers satisfying $1\le m<n,~ n>2.$
Assume that $\gcd(n,mB)=1$. Let
$f(X)=X^n+A(BX+1)^m$
be irreducible over $\Q$, and let $D=n^n+(-1)^{n+m}B^n(n-m)^{\,n-m}m^mA.$
Then $f$ is monogenic, if and only if both $A$ and $D$ are square-free.
\end{theorem}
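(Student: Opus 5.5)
The proof goes prime by prime, using the results of Section~3. For each prime $p$ I show that $\Z[\theta]$ is $p$-maximal exactly when $p^2\nmid A$ and $p^2\nmid D$. Since monogenity means $p$-maximality at every prime, this gives the theorem.

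\emph{Primes dividing $A$.} Modulo $p$ we have $f\equiv X^n$, and every non-leading coefficient of $f$ is $A$ times an integer.
\begin{itemize}
\item If $p\,\|\,A$, then the constant term is $f(0)=A$, so $f$ is Eisenstein at $p$. Proposition~\ref{prop:eis} gives $p$-maximality.
\item If $p^2\mid A$, take $r=0$. Then $f(0)=A$ is divisible by $p^2$, and $f'(0)=AmB$ is divisible by $p$. Theorem~\ref{thm:nec} gives $p\mid I(\theta)$.
\end{itemize}
So at primes dividing $A$ the order is $p$-maximal iff $p^2\nmid A$. Note also that such $p$ do not divide $D$, since $D\equiv n^n$ modulo $p$ and $\gcd(n,mB)=1$ forces $p\nmid n$ here only if $p\nmid n$; I will check this case separately if needed.

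\emph{Primes $p\nmid A$: locating repeated roots.} The key identity is
\[
Xf'(X)-nf(X)=A(BX+1)^{m-1}\bigl((m-n)BX-n\bigr).
\]
Let $\bar r$ be a repeated root of $\bar f$. It cannot satisfy $B\bar r+1=0$, because then $\bar f(\bar r)=\bar r^{\,n}$ would force $\bar r=0$, which contradicts $B\bar r+1=0$. Hence $(m-n)B\bar r\equiv n \pmod p$. If $p\mid (n-m)B$, this forces $p\mid n$, and with $\gcd(n,mB)=1$ one checks that $p\mid n$ is also impossible. So $\bar r$ is the unique root of this linear congruence.

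Next I verify $p\nmid f''(r)$ at this $r$, by computing $f''$ modulo $p$ from the same identity after differentiating. This shows that $\bar r$ is exactly a double root of $\bar f$, and that it is the only repeated factor.

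\emph{Relating $f(r)$ to $D$.} Lift $\bar r$ to an integer $r$ with $(m-n)Br\equiv n \pmod{p^2}$. The central computation is
\[
v_p(f(r))=v_p(D)\quad\text{when this is at most }1,\qquad p^2\mid f(r)\iff p^2\mid D.
\]
To obtain it, substitute $BXr+1\equiv m/(m-n)$, i.e.\ $Br+1\equiv \frac{m}{m-n}$ and $r\equiv \frac{n}{(m-n)B}$, modulo $p^2$ into $f(r)$. Then multiply by the unit $\bigl((m-n)B\bigr)^n(m-n)^{-m}$ to get a unit multiple of $D$ modulo $p^2$.

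The choice of lift is harmless. Since $p\mid f'(r)$, we have $f(r+pt)\equiv f(r)\pmod{p^2}$, so the residue of $f(r)$ modulo $p^2$ does not depend on the lift. The case analysis then runs as follows.
\begin{itemize}
\item If $p\nmid D$, then $\bar f$ is separable. Either Dedekind's criterion, or Theorem~\ref{thm:suff} with hypothesis (iii) vacuous, gives $p$-maximality.
\item If $p^2\mid D$, Theorem~\ref{thm:nec} applies directly.
\item If $p\,\|\,D$, hypotheses (i) and (ii) of Theorem~\ref{thm:suff} hold, and it remains to verify (iii).
\end{itemize}

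\emph{Main obstacle: hypothesis (iii).} I expect verifying (iii) to be the hardest step. Write $\bar f=(X-\bar r)^2\bar h$ with $\gcd\bigl(X-\bar r,\bar h\bigr)=1$. Choose the Bézout identity $u(X-r)^2+vh\equiv 1\pmod p$.

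Suppose a maximal ideal $\m\supseteq \C_\theta+p\OK$ does not contain $\theta-r$. Then $\m$ lies over a simple factor of $\bar f$, i.e.\ $h(\theta)\in\m$. I would then show that some element of $\C_\theta$ lies outside $\m$, which is a contradiction.

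The element I would try first is $c=(\theta-r)^2\,u(\theta)$, multiplied by a suitable power. The plan is to show that the order is already maximal locally at $\m$. The argument I would use is the standard Dedekind-type fact that $\Z[\theta]$ localized at the prime $(p,h(\theta))$ is regular, because that factor of $\bar h$ is simple. This then gives $\m\nmid\C_\theta$.
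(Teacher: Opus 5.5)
\textbf{There is a genuine gap.} Your architecture matches the paper's. You use Eisenstein, or Theorem~\ref{thm:nec} at $r=0$, when $p\mid A$. For $p\nmid A$ you solve $(m-n)Br\equiv n\pmod{p^2}$, use $((m-n)B)^n f(r)\equiv D\pmod{p^2}$, and then apply Theorem~\ref{thm:nec} or Theorem~\ref{thm:suff}. However, two links between $A$, $D$ and $n,m,B$ are missing, and without them the prime-by-prime criterion you prove is not the one in the theorem.

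First, your remark that primes dividing $A$ do not divide $D$ is false: $n=3$, $m=B=1$, $A=3$ gives $f=X^3+3X+3$ and $D=39$. At $p\mid A$ you only show ``$p$-maximal iff $p^2\nmid A$''. To conclude ``monogenic iff $A$ and $D$ are square-free'' you must still rule out $p\,\|\,A$ with $p^2\mid D$. This is Lemma~\ref{lem:sqA}. From $p\mid A$ and $p\mid D$ you get $p\mid n^n$, so $p\mid n$, hence $p\nmid mB(n-m)$ and $p^2\mid n^n$ (as $n\ge 2$). Then $p^2$ divides $D-n^n=\pm B^n(n-m)^{n-m}m^mA$, hence $p^2\mid A$.

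Second, for $p\nmid A$ you deduce $p\nmid nB(n-m)$ only from the \emph{existence} of a repeated root. In the cases $p^2\mid D$ and $p\,\|\,D$ you need the reverse direction. From $p\mid D$ alone you must show the linear congruence is solvable and $p\nmid n$; otherwise you have no $r$ to feed into Theorems~\ref{thm:nec} and~\ref{thm:suff}. This is Lemma~\ref{lem:coprime}. If $p\mid n$, then $p\nmid mB$, and $p\mid D$ forces $p\mid n-m$, hence $p\mid m$, a contradiction. If $p\mid Bm(n-m)$, then $p\mid D-n^n$, so $p\mid n$, which was just excluded. With $r$ in hand, your identity gives $rf'(r)\equiv nf(r)\equiv 0\pmod p$ with $p\nmid r$, so $p\mid f'(r)$.

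For hypothesis (iii) you take a different route from the paper, and it works once stated precisely. Uniqueness of repeated roots over $\overline{\F_p}$ makes $\bar h$ square-free and prime to $X-\bar r$. So $\m\cap\Z[\theta]=(p,\phi(\theta))$ for an irreducible factor $\bar\phi$ of $\bar h$ that is simple in $\bar f$. The localization there is a DVR, hence integrally closed, so some $s\in\C_\theta$ lies outside $\m$. Localize at $(p,\phi(\theta))$, not at $(p,h(\theta))$, which need not be prime; the element $(\theta-r)^2u(\theta)$ is not needed.

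The paper's argument is shorter. Since $f'(\theta)\in\C_\theta\subseteq\m$, $\bar\theta$ is a common root of $\bar f$ and $\bar f'$ in the field $\OK/\m$. Proposition~\ref{prop:residue}, proved over any field of characteristic $p$, then forces $\bar\theta=\bar r$, i.e.\ $\theta-r\in\m$. The same observation settles the case $p\nmid D$. There, applying Theorem~\ref{thm:suff} ``with (iii) vacuous'' is circular, since by Lemma~\ref{lem:pmax} vacuity of (iii) is exactly $p$-maximality. Your Dedekind/discriminant alternative for that case is fine.
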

The following corollary follows from the above theorem. It is conjectured  by L. Jones in \cite[Conjecture 4.1]{LJ}.
\begin{corollary}\label{cor}
	Let $p$ be a prime number, $n, m$ and  $B$ be positive integers with $1\le m\le n-1,~n>2$ and $\gcd(n,mB)=1.$ Then $f(X)=X^n+p(BX+1)^m$ is monogenic if and only if   $n^n+(-1)^{n+m}B^n(n-m)^{n-m}m^mp$ is square-free.
\end{corollary}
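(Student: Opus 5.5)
The statement to prove is Corollary~\ref{cor}. I will specialize Theorem~\ref{th1} to $A=p$. Here $B$ is a positive integer, so $B\neq 0$. Also $1\le m<n$, $n>2$ and $\gcd(n,mB)=1$, so every numerical hypothesis of Theorem~\ref{th1} holds with $A=p$. Under this substitution $D$ becomes exactly $n^n+(-1)^{n+m}B^n(n-m)^{n-m}m^mp$.

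Two further points are needed.

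\emph{Irreducibility.} Theorem~\ref{th1} assumes $f$ is irreducible, while the corollary does not state this. I will show that $f(X)=X^n+p(BX+1)^m$ is irreducible by Eisenstein's criterion at $p$.
\begin{itemize}
\item $f$ is monic of degree $n$, since $m<n$.
\item Every non-leading coefficient is $p$ times a coefficient of $(BX+1)^m$, so it is divisible by $p$.
\item The constant term is $f(0)=p$, so $p^2\nmid f(0)$.
\end{itemize}
Hence $f$ is $p$-Eisenstein and therefore irreducible over $\Q$. In particular the extra hypothesis of Theorem~\ref{th1} is automatic. (By Proposition~\ref{prop:eis}, $\Z[\theta]$ is also $p$-maximal, though we do not need this.)

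\emph{Square-freeness of $A$.} Theorem~\ref{th1} requires both $A$ and $D$ to be square-free. Here $A=p$ is prime, so it is square-free. The condition in Theorem~\ref{th1} therefore reduces to: $D$ is square-free.

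Combining these points, Theorem~\ref{th1} gives that $f$ is monogenic if and only if $n^n+(-1)^{n+m}B^n(n-m)^{n-m}m^mp$ is square-free, which is the corollary.

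There is no real obstacle once Theorem~\ref{th1} is available. The only step needing care is noticing that irreducibility must be supplied, and Eisenstein at $p$ provides it.
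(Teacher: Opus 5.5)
Your proposal is correct and is the same argument as the paper, which obtains the corollary directly from Theorem~\ref{th1} by setting $A=p$, so that $A$ is automatically square-free. Your Eisenstein-at-$p$ check supplies the irreducibility hypothesis of Theorem~\ref{th1}, which the paper leaves implicit, and it is a worthwhile detail to include.
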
 
Throughout this section, let $f(X)=X^n+A(BX+1)^m,$
where $A,B\in\Z\setminus\{0\}$, $1\le m<n$, and assume that $f$ is irreducible over $\Z$.
Let   \begin{align}D &= n^n + (-1)^{n+m} B^n (n-m)^{n-m} m^m A \ \in\ \Z. \label{eq:D}\end{align} Direct computation gives, for
$a \in \Z$,
\begin{align}
  f(a)   &= a^n + A\,(Ba+1)^m, \label{eq:fa}\\
  f'(a)  &= n\,a^{n-1} + mAB\,(Ba+1)^{m-1}, \label{eq:fpa}\\
  f''(a) &= n(n-1)\,a^{n-2} + m(m-1)AB^2\,(Ba+1)^{m-2}. \label{eq:fppa}
\end{align}

\begin{lemma}\label{lem:cong}
Let $f,~D$ be as above and $a, N \in \Z$ with $N \mid B(n-m)a + n$. Then
\[
  \bigl(B(n-m)\bigr)^n f(a) \ \equiv\ (-1)^n D \pmod N.
\]
\end{lemma}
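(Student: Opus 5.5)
Multiply $f(a)=a^n+A(Ba+1)^m$ by $(B(n-m))^n$ and reduce modulo $N$ using the hypothesis
\[
B(n-m)a\equiv -n \pmod N.
\]
The first term is immediate:
\[
\bigl(B(n-m)\bigr)^n a^n=\bigl(B(n-m)a\bigr)^n\equiv(-n)^n=(-1)^n n^n \pmod N.
\]

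For the second term, we first handle the factor $Ba+1$. Multiplying it by $(n-m)$ gives
\[
(n-m)(Ba+1)=B(n-m)a+(n-m)\equiv -n+(n-m)=-m \pmod N.
\]
Hence $(n-m)^m(Ba+1)^m\equiv(-m)^m=(-1)^m m^m \pmod N$.

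Now split $(B(n-m))^n=B^n(n-m)^{n-m}(n-m)^m$. This gives
\[
\bigl(B(n-m)\bigr)^n A(Ba+1)^m = A\,B^n (n-m)^{n-m}\,\bigl[(n-m)(Ba+1)\bigr]^m\equiv (-1)^m A B^n (n-m)^{n-m} m^m \pmod N.
\]

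Adding the two contributions yields
\[
\bigl(B(n-m)\bigr)^n f(a)\equiv (-1)^n n^n+(-1)^m B^n(n-m)^{n-m}m^mA \pmod N.
\]
Since $(-1)^m=(-1)^n(-1)^{n+m}$, the right-hand side factors as
\[
(-1)^n\bigl(n^n+(-1)^{n+m}B^n(n-m)^{n-m}m^mA\bigr)=(-1)^nD,
\]
which is the claimed congruence.

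The whole argument is substitution modulo $N$, so no step is a real obstacle. The only point needing care is splitting the power of $B(n-m)$ as $B^n(n-m)^{n-m}(n-m)^m$, so that exactly $m$ factors of $(n-m)$ pair with $(Ba+1)^m$. This uses only $m<n$; no coprimality or invertibility modulo $N$ is needed.
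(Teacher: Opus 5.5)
Your proposal is correct and matches the paper's proof essentially step for step. You use the same reduction $(n-m)(Ba+1)\equiv -m \pmod N$ and the same splitting $(B(n-m))^n=B^n(n-m)^{n-m}(n-m)^m$ to pair $m$ factors of $(n-m)$ with $(Ba+1)^m$, and your sign bookkeeping is also correct.
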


\begin{proof}
Set $z = B(n-m)$, so that $za \equiv -n \pmod N$. Note that,
\begin{equation}\label{eq:aux}
  (n-m)(Ba+1) \ =\ za + (n-m) \ \equiv\ -n + (n-m) \ =\ -m \pmod N.
\end{equation}
So, keeping in mind \eqref{eq:fa}, we have
\begin{align*}
  z^n f(a)
  &= (za)^n + A\,B^n (n-m)^n (Ba+1)^m\\
  &= (za)^n + A\,B^n (n-m)^{n-m}\bigl((n-m)(Ba+1)\bigr)^m\\
  &\equiv (-n)^n + A\,B^n (n-m)^{n-m}(-m)^m \pmod N\\
  &= (-1)^n\Bigl(n^n + (-1)^{n+m} B^n (n-m)^{n-m} m^m A\Bigr) \ =\ (-1)^n D. \qedhere
\end{align*}
\end{proof}

\begin{corollary}\label{cor:cong}
If $\gcd\bigl(N,\ B(n-m)\bigr) = 1$, then $N \mid f(a)$ if and only if $N \mid D$.
\end{corollary}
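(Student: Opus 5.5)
The corollary follows directly from Lemma~\ref{lem:cong}. The idea is to work with the integer $z=B(n-m)$. Since $\gcd(N,z)=1$, the power $z^n$ is a unit modulo $N$, so $N\mid z^n f(a)$ if and only if $N\mid f(a)$.

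Throughout we keep the standing hypothesis of the corollary's setting: $a$ and $N$ are as in Lemma~\ref{lem:cong}, so that
\[
N\mid B(n-m)a+n .
\]
The lemma then gives
\[
z^n f(a)\equiv(-1)^nD \pmod N .
\]
Hence
\[
N\mid f(a)\iff N\mid z^nf(a)\iff N\mid (-1)^nD\iff N\mid D,
\]
where the last equivalence holds because $(-1)^n$ is a unit.

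For the first equivalence, the forward direction is immediate. For the reverse direction, suppose $N\mid z^n f(a)$. Since $\gcd(N,z^n)=1$ (any prime dividing both $N$ and $z^n$ would divide $z$), Euclid's lemma gives $N\mid f(a)$.

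There is no real obstacle here. The only point to be careful about is that the divisibility hypothesis $N\mid B(n-m)a+n$ from Lemma~\ref{lem:cong} is implicitly part of the corollary's setting, and it must be stated together with the coprimality condition.
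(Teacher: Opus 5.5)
Your proof is correct and is essentially the paper's argument: the corollary is stated without proof as an immediate consequence of Lemma~\ref{lem:cong}, and the point is exactly that $\bigl(B(n-m)\bigr)^n$ is invertible modulo $N$, so $N\mid f(a)\iff N\mid D$. You are also right that the hypothesis $N\mid B(n-m)a+n$ is implicitly inherited from the lemma; that is how the corollary is applied (with $N=p^2$) in the proof of Theorem~\ref{th1}.
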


\begin{lemma}\label{lem:bezout}
If $p \nmid B(n-m)$, there exists $a \in \Z$ with $p^2 \mid B(n-m)\,a + n$.
\end{lemma}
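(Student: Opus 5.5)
The statement is elementary, so the plan is simply to invert the coefficient $B(n-m)$ modulo $p^2$. Write $z=B(n-m)$. By hypothesis $p\nmid z$, and since $p$ is prime this gives $\gcd(z,p^2)=1$. Hence $z$ is a unit in $\Z/p^2\Z$. Concretely, B\'ezout's identity supplies integers $u,v$ with
\[
uz+vp^2=1 .
\]

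Now I would set $a=-nu$. Then
\[
za+n = -nuz+n = n(1-uz) = nvp^2 ,
\]
which is divisible by $p^2$, as required.

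No step here is a real obstacle. The only point to record is that the hypothesis $p\nmid B(n-m)$ is exactly what makes $z$ invertible modulo $p^2$.

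For the later use of this lemma I would also note that $a$ is unique modulo $p^2$. With this $a$, Lemma~\ref{lem:cong} applies with $N=p^2$, and Corollary~\ref{cor:cong} then turns $p^2\mid D$ into $p^2\mid f(a)$. That is the input needed for Theorem~\ref{thm:nec}.
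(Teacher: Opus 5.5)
Your proof is correct and follows essentially the same approach as the paper, which simply notes that $p\nmid B(n-m)$ makes $B(n-m)a\equiv -n \pmod{p^2}$ solvable. You make this explicit by inverting $B(n-m)$ modulo $p^2$ via B\'ezout and taking $a=-nu$.
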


\begin{proof}
Since $p\nmid B(n-m)$, the congruence $B(n-m)a\equiv-n\pmod{p^2}$ has a solution.
\end{proof}


\begin{lemma}\label{lem:coprime}
Let $f,~D$ be as above and assume  that $\gcd(n,\ mB) = 1$. If $p \nmid A$ and $p \mid D$, then $p \nmid n$, $p \nmid B$,
$p \nmid m$, and $p \nmid n-m$.
\end{lemma}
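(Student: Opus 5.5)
The plan is to work directly from the definition $D=n^n+(-1)^{n+m}B^n(n-m)^{n-m}m^mA$ and reduce it modulo $p$ under the standing hypotheses $p\mid D$, $p\nmid A$, $\gcd(n,mB)=1$. Each of the four non-divisibility claims is obtained by assuming the opposite and deriving $p\mid A$ or a contradiction with $\gcd(n,mB)=1$. The claims are proved in the order $p\nmid n$, then $p\nmid B$, then $p\nmid m$, then $p\nmid n-m$.

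First, suppose $p\mid n$. Then $p\mid n^n$, so $p\mid D$ forces
\[
p \mid B^n(n-m)^{n-m}m^mA.
\]
Since $p\nmid A$, $p$ divides one of $B$, $n-m$, or $m$. If $p\mid B$ or $p\mid m$, then $p\mid\gcd(n,mB)=1$, which is impossible. If $p\mid n-m$, then together with $p\mid n$ we get $p\mid m$, and this is the same contradiction. Hence $p\nmid n$.

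Next, suppose $p\mid B$. Note $n\ge 1$, so $p\mid B^n$. Then $D\equiv n^n\pmod p$, and $p\mid D$ gives $p\mid n$, contradicting the previous step (it also contradicts $\gcd(n,mB)=1$ directly). The same argument works for $m$: if $p\mid m$, then $p\mid m^m$ because $m\ge1$, so again $D\equiv n^n\pmod p$ and $p\mid n$, a contradiction. Finally, suppose $p\mid n-m$. Since $n-m\ge1$, we have $p\mid (n-m)^{n-m}$, so once more $p\mid n^n$ and hence $p\mid n$, a contradiction.

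There is no real obstacle here. The only point needing care is that all the exponents $n$, $m$, $n-m$ are positive, which holds because $1\le m<n$. This ensures that each power really is divisible by $p$ whenever its base is, so the second summand of $D$ vanishes modulo $p$ in every case considered.
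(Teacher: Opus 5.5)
Your proposal is correct and follows essentially the same argument as the paper. You first rule out $p\mid n$ using $p\mid B^n(n-m)^{n-m}m^mA$, $p\nmid A$, and $\gcd(n,mB)=1$, and then rule out $p\mid B$, $p\mid m$, $p\mid n-m$ by observing that each forces $D\equiv n^n \pmod p$ and hence $p\mid n$.
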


\begin{proof}

First suppose that $p \mid n$. Since $\gcd(n,mB)=1$, we have
$p \nmid m$ and $p \nmid B$. 
As $p\mid D$ and $p\mid n^n$, it follows that
$p\mid B^n(n-m)^{\,n-m}m^mA.$
Hence $p$ divides one of $B$, $n-m$, $m$, or $A$. The first, third, and fourth
possibilities contradict $p\nmid B$, $p\nmid m$, and $p\nmid A$, respectively.
If $p\mid n-m$, then together with the fact that $p\mid n$, we obtain that $p\mid m$, again a
contradiction. Therefore $p\nmid n$.

Now suppose that $p$ divides one of $B$, $m$, or $n-m$. Since $p\nmid n$, we
have $p\nmid n^n$. On the other hand,
$D-n^n=(-1)^{n+m}B^n(n-m)^{\,n-m}m^mA$
is divisible by $p$, and hence so is $D-n^n$. As $p\mid D$, this implies
$p\mid n^n$, contradicting $p\nmid n$. Thus
$p\nmid B$, $p\nmid m$, and $p\nmid n-m$.
\end{proof}

\begin{lemma}\label{lem:sqA}
Assume $ \gcd(n,\ mB) = 1$. If $p \mid A$ and $p^2 \mid D$, then $p^2 \mid A$.
\end{lemma}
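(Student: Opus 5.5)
We argue directly from the shape of $D$. Assume $\gcd(n,mB)=1$, $p\mid A$ and $p^2\mid D$. The first step is to show that $p\nmid n$. Suppose instead $p\mid n$. Since $p\mid A$, the second summand of $D$ is divisible by $p$. We then look at $p$-adic valuations. We have $v_p(n^n)\ge n\ge 3$ because $n>2$. Since $p\nmid mB$ by the coprimality hypothesis, and also $p\nmid n-m$ (as $p\mid n$, $p\nmid m$), we get
\[
v_p\bigl(B^n(n-m)^{n-m}m^mA\bigr)=v_p(A).
\]
Hence if $v_p(A)=1$, then $v_p(D)=1$, because the other summand has valuation at least $3$. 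This contradicts $p^2\mid D$, so $v_p(A)\ge 2$ and the claim holds in this case.

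The second step is the case $p\nmid n$. Then $p\nmid n^n$, while the second summand of $D$ is divisible by $p$ because $p\mid A$. Therefore $D\equiv n^n\not\equiv 0\pmod p$, contradicting $p\mid D$. So this case cannot occur.

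Combining the two cases, $p\mid A$ and $p^2\mid D$ force $p\mid n$ and then $v_p(A)\ge 2$. This proves the statement.

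The only delicate point is the valuation comparison in the first case. It uses $n\ge 3$ to ensure $v_p(n^n)\ge 2$. It also uses $\gcd(n,mB)=1$ to ensure that $B$, $m$ and $n-m$ contribute nothing to the valuation of the second summand. No appeal to the local criteria of Section~3 is needed here.
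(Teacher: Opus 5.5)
Your proof is correct and follows essentially the paper's argument: $p\mid A$ and $p\mid D$ force $p\mid n$, the hypothesis $\gcd(n,mB)=1$ then makes $B$, $m$, $n-m$ prime to $p$, and $p^2\mid n^n$ transfers $p^2\mid D$ to $p^2\mid A$. The only differences are cosmetic: you phrase the last step as a valuation contradiction, whereas the paper directly notes $p^2\mid D-n^n$; and only $n\ge 2$ is actually needed, not $n\ge 3$.
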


\begin{proof}
From $p \mid A$ and $p \mid D$, we get $p \mid n^n$, hence $p \mid n$; then the hypothesis
gives $p \nmid m, B$, and consequently $p \nmid n-m$. Since $n \ge 2$ we get $p^2 \mid n^n$, so
$p^2$ divides $D - n^n = (-1)^{n+m}B^n (n-m)^{n-m} m^m A$, all of whose factors except $A$ are
coprime to $p$. Hence $p^2 \mid A$.
\end{proof}


The next proposition is stated for an arbitrary field; so it applies simultaneously to $\F_p$ {and} to every residue field $\OK/\m$ with
$p \in \m$.

Let $F$ be a field in which the image of $\Z$ has kernel $p\Z$. Assume that  
\begin{equation*}
  \gcd(n,\ mB) = 1,~~ p\nmid A. \tag{$*$}\label{eq:str}
\end{equation*} and write $\bar f$
for the image of $f$ in $F[X]$, and $u = \bar B x + 1$ for $x \in F$.

\begin{proposition}\label{prop:residue}
Assume that \eqref{eq:str} holds. Let $x \in F$ be a common root of $\bar f$ and $\bar f'$. Then $x \ne 0$, $u \ne 0$,
$p \nmid n$, $p \nmid B$, $p \nmid m$, $p \nmid n-m$, and
\begin{equation}\label{eq:lin}
  (\bar n - \bar m)\,\bar B\,x \ =\ -\,\bar n.
\end{equation}
In particular; the coefficient $(\bar n - \bar m)\bar B$ is invertible, so $x$ is uniquely
determined by~\eqref{eq:lin} and lies in the prime field of $F$.
\end{proposition}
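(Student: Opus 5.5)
We work directly with the two equations $\bar f(x)=0$ and $\bar f'(x)=0$ in $F$, namely
\[
x^n+\bar A u^m=0,\qquad \bar n x^{n-1}+\bar m\bar A\bar B u^{m-1}=0,
\]
where $u=\bar Bx+1$ and $\bar A\neq 0$ because $p\nmid A$. The plan is to obtain the nonvanishing statements first, then eliminate $\bar A$ to reach a linear relation, and only at the end read off the divisibility conditions from that relation.

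First, $x\neq0$ and $u\neq0$. If $x=0$, then $u=1$ and $\bar f(0)=\bar A\ne0$, which is a contradiction. If $u=0$, then $\bar f(x)=0$ gives $x^n=0$, so $x=0$; but then $u=1$, again a contradiction.

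Next we eliminate $\bar A$. Multiply the derivative equation by $xu$ and substitute $\bar A u^m=-x^n$ from the first equation:
\[
0=\bar n x^{n}u+\bar m\bar B x\cdot \bar A u^{m}= x^n\bigl(\bar n u-\bar m\bar B x\bigr).
\]
Since $x\ne0$, this gives $\bar n(\bar Bx+1)=\bar m\bar Bx$, that is, $(\bar n-\bar m)\bar Bx=-\bar n$, which is \eqref{eq:lin}.

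The main obstacle is deducing the four non-divisibility conditions, since $\gcd(n,mB)=1$ has to be combined carefully with \eqref{eq:lin}.

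\emph{Case $p\mid n$.} Then $p\nmid m$ and $p\nmid B$. The derivative equation becomes $\bar m\bar A\bar B u^{m-1}=0$, and each factor is nonzero, which is a contradiction. Hence $p\nmid n$.

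\emph{The remaining primes.} Since $\bar n\ne0$, the left side of \eqref{eq:lin} must be nonzero. Therefore $\bar B\ne0$ and $\bar n-\bar m\ne0$, so $p\nmid B$ and $p\nmid n-m$.

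\emph{Case $p\mid m$.} The derivative equation reduces to $\bar n x^{n-1}=0$, which forces $x=0$ and contradicts the first step. Hence $p\nmid m$.

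Finally, because $(\bar n-\bar m)\bar B$ is a nonzero integer image and therefore invertible, \eqref{eq:lin} gives
\[
x=-\bar n\bigl((\bar n-\bar m)\bar B\bigr)^{-1},
\]
which is uniquely determined and lies in the prime field $\F_p\subseteq F$.
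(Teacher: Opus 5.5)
Your proof is correct and is essentially the paper's argument: show $x\neq0$ and $u\neq0$, eliminate one term between $\bar f(x)=0$ and $\bar f'(x)=0$ to reach \eqref{eq:lin}, then rule out $p\mid n$, $p\mid B$, $p\mid n-m$, $p\mid m$ in turn. The differences are cosmetic: you eliminate $\bar A u^m$ and factor out $x^n$, where the paper forms $\bar n\bar f(x)-x\bar f'(x)$ and factors out $\bar A u^{m-1}$; and for $p\mid m$ you use $\bar n x^{n-1}=0$ where the paper derives $u=0$ from \eqref{eq:lin}.
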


\begin{proof}
Since $\bar f(x)=0$, we have $x\neq0$, for otherwise
$\bar f(x)=\bar A\neq0$. Similarly, $u\neq0$, since otherwise
$\bar f(x)=x^n\neq0$. Combining \eqref{eq:fa} and \eqref{eq:fpa}, we obtain
\begin{align}
0
&=\bar n\,\bar f(x)-x\,\bar f'(x)\notag\\
&=\bar n\bigl(x^n+\bar Au^m\bigr)
-x\bigl(\bar n x^{n-1}+\bar m\bar A\bar Bu^{m-1}\bigr)\notag\\
&=\bar A u^{m-1}\bigl(\bar nu-\bar m\bar Bx\bigr).
\label{eq:collapse}
\end{align}
Since $\bar A\neq0$ and $u\neq0$, it follows that
$\bar nu=\bar m\bar Bx$, which is equivalent to
\eqref{eq:lin}.

We next prove that $p\nmid n$, $p\nmid B$, $p\nmid m$, and
$p\nmid(n-m)$. If $p\mid n$, then $\bar n=0$. Since $p\nmid A$ and
$\gcd(n,mB)=1$, we have $\bar A,\bar B,\bar m\neq0$.
Hence $\bar f'(x)=\bar m\bar A\bar Bu^{m-1}\neq0,$
contradicting $\bar f'(x)=0$. Thus $p\nmid n$.
If $p\mid B$, then \eqref{eq:lin} gives $0=-\bar n$, contradicting
$p\nmid n$. The case $p\mid(n-m)$ is identical. Finally, if
$p\mid m$, then $\bar m=0$, so \eqref{eq:lin} yields
$\bar Bx=-1$, that is, $u=0$, again a contradiction. Hence $(\bar n-\bar m)\bar B$ is invertible. Therefore
\eqref{eq:lin} uniquely determines $x$, and consequently
$x$ belongs to the prime field of $F$.
\end{proof}

\begin{proposition}\label{prop:Dzero}
Assume that \eqref{eq:str} holds. Let $x\in F$ satisfy
\eqref{eq:lin}, and suppose that
$p\nmid nBm(n-m).$
Then
$\bar f(x)=\bar f'(x)=0
~\Longleftrightarrow~
\bar D=0
\ \text{in }F.$
\end{proposition}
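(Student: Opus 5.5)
The plan is to work entirely in $F$ and reduce both directions to the identity of Lemma~\ref{lem:cong}, now read in $F$ rather than modulo $N$. Put $z=(\bar n-\bar m)\bar B$. By hypothesis $p\nmid nBm(n-m)$, so $z$ is a unit in $F$, and \eqref{eq:lin} says $zx=-\bar n$. Repeating the computation \eqref{eq:aux} in $F$ gives $(\bar n-\bar m)u=zx+(\bar n-\bar m)=-\bar m$. Substituting this into $z^n\bar f(x)$ exactly as in the proof of Lemma~\ref{lem:cong} gives
\[
z^n\bar f(x)=(-\bar n)^n+\bar A\bar B^n(\bar n-\bar m)^{n-m}(-\bar m)^m=(-1)^n\bar D .
\]
Since $z\neq0$, this shows $\bar f(x)=0\iff \bar D=0$.

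For the forward direction there is nothing more to do: $\bar f(x)=0$ already forces $\bar D=0$.

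For the converse, assume $\bar D=0$. The identity then gives $\bar f(x)=0$, and it remains to show $\bar f'(x)=0$. I will use the relation \eqref{eq:collapse},
\[
\bar n\,\bar f(x)-x\,\bar f'(x)=\bar A u^{m-1}\bigl(\bar n u-\bar m\bar B x\bigr).
\]
This is a polynomial identity in $x$, so it holds for every $x\in F$ and not only at common roots. The right-hand side vanishes, because \eqref{eq:lin} is equivalent to $\bar n u=\bar m\bar B x$: indeed $\bar n(\bar B x+1)-\bar m\bar B x=(\bar n-\bar m)\bar B x+\bar n$. Since also $\bar f(x)=0$, the identity reduces to $x\,\bar f'(x)=0$. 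Finally $x\neq0$, because $x=-\bar n z^{-1}$ and $p\nmid n$. Hence $\bar f'(x)=0$.

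The only delicate point is making sure each inverted quantity really is a unit in $F$. Both $z$ and $x$ are nonzero because $p\nmid nB(n-m)$. Nothing requires $u$ to be invertible, since $u^{m-1}$ appears only multiplied by a factor that is zero. This uses the hypothesis that the kernel of $\Z\to F$ is exactly $p\Z$, so $p\nmid c$ implies $\bar c\neq0$. Beyond that, the argument only transports the integer congruence of Lemma~\ref{lem:cong} into $F$, which is routine.
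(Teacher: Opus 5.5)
Your proof is correct and essentially the paper's argument: the same identity $z^n\bar f(x)=(-1)^n\bar D$ in $F$ gives $\bar f(x)=0\iff\bar D=0$, and the remaining implication $\bar f(x)=0\Rightarrow\bar f'(x)=0$ comes from the relation $\bar n u=\bar m\bar B x$. The only difference is cosmetic: you use the identity \eqref{eq:collapse} and cancel $x$, whereas the paper multiplies $\bar f'(x)$ by $xu$ (which is $u$ times your identity) and therefore also has to check $u\neq0$.
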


\begin{proof}
By \eqref{eq:lin}, $(\bar n-\bar m)u=-\bar m.$
Raising 
$(\bar n-\bar m)\bar Bx=-\bar n
~\text{and}~
(\bar n-\bar m)u=-\bar m$
to the powers $n$ and $m$, respectively, and arguing exactly as in the proof
of Lemma~\ref{lem:cong}, we have
\begin{equation}\label{eq:fieldcong}
\bigl((\bar n-\bar m)\bar B\bigr)^n\bar f(x)
=
(-1)^n\bar D.
\end{equation}
Since $(\bar n-\bar m)\bar B$ is invertible, it follows that
$\bar f(x)=0
~\Longleftrightarrow~
\bar D=0.$ It remains to prove that $\bar f(x)=0
~\Longrightarrow~
\bar f'(x)=0.$
From \eqref{eq:lin}, we have $x\neq0$, for otherwise $\bar n=0$.
Moreover, $(\bar n-\bar m)u=-\bar m$
shows that $u\neq0$, since $p\nmid m$ and $p\nmid(n-m)$. Multiplying
$\bar f'(x)$ by the nonzero element $xu$ and using
$\bar n u=\bar m\bar Bx$, we obtain
$\bar f'(x)\,xu
=
\bar nux^n+\bar m\bar A\bar Bxu^m
=
\bar nu\bigl(x^n+\bar Au^m\bigr)
=
\bar nu\,\bar f(x)
=
0.$
Hence $\bar f'(x)=0$, which completes the proof.
\end{proof}

\begin{proposition}\label{prop:tame}
Assume that \eqref{eq:str} holds. Let $x\in F$ satisfy $\bar f(x)=\bar f'(x)=0.$
Then $\bar f''(x)\neq0.$
\end{proposition}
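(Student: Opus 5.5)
The plan is a direct computation in $F$: use the two relations coming from $\bar f(x)=\bar f'(x)=0$, together with the linear relation of Proposition~\ref{prop:residue}, to collapse $x^2\bar f''(x)$ into a single monomial whose nonvanishing can be read off.

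First I invoke Proposition~\ref{prop:residue}, which applies because $x$ is a common root of $\bar f$ and $\bar f'$ and \eqref{eq:str} holds. It gives $x\neq0$ and $u\neq0$, and $p\nmid n$, $p\nmid B$, $p\nmid m$, $p\nmid(n-m)$. It also gives the relation $\bar n u=\bar m\bar B x$ from \eqref{eq:collapse}, equivalent to \eqref{eq:lin}. Since $p\nmid m$, $\bar m$ is invertible, so I can write $\bar B x=\bar n u/\bar m$. From $\bar f(x)=0$ I also record $x^n=-\bar A u^m$.

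Next I multiply \eqref{eq:fppa} by $x^2$ and use the two relations above:
\[
x^2\bar f''(x)=\bar n(\bar n-1)x^n+\bar m(\bar m-1)\bar A(\bar Bx)^2u^{m-2}.
\]
Substituting $x^n=-\bar Au^m$ and $(\bar Bx)^2=\bar n^2u^2/\bar m^2$ gives
\[
x^2\bar f''(x)=\bar A u^m\,\bar n\Bigl(-(\bar n-1)+\frac{(\bar m-1)\bar n}{\bar m}\Bigr)
=\frac{\bar A\,u^m\,\bar n\,(\bar m-\bar n)}{\bar m}.
\]

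Finally, every factor on the right is nonzero in $F$. Indeed $\bar A\neq0$ by \eqref{eq:str}, $u\neq0$, and $p\nmid n$, $p\nmid(n-m)$, $p\nmid m$ by Proposition~\ref{prop:residue}. Since $x\neq0$, it follows that $\bar f''(x)\neq0$.

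There is no real obstacle here. The only point needing care is the algebraic simplification $-(n-1)m+(m-1)n=m-n$, which is what makes the coefficient nonvanishing rely exactly on $p\nmid n-m$.
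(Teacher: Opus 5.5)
Your computation is correct and is essentially the paper's argument: both proofs multiply $\bar f''(x)$ by a nonzero element of $F$ and use the vanishing relations together with \eqref{eq:lin} to collapse it to a nonzero monomial. The differences are minor. The paper multiplies by $xu$ and substitutes $\bar f'(x)=0$ and $-(\bar n-1)u+(\bar m-1)\bar Bx=1$, obtaining $\bar f''(x)\,xu=\bar m\bar A\bar B\,u^{m-1}$, which is nonzero by $p\nmid mAB$. You instead multiply by $x^2$ and substitute $\bar f(x)=0$, so your nonvanishing uses $p\nmid mn(n-m)$; all of these conditions are supplied by Proposition~\ref{prop:residue}.
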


\begin{proof}
Since $\bar f(x)=\bar f'(x)=0$, Proposition~\ref{prop:residue}
shows that \eqref{eq:lin} holds. As
$u=\bar Bx+1,$
equation~\eqref{eq:lin} yields
$(\bar n-\bar m)\bar Bx=-\bar n,$
from which we obtain
$-(\bar n-1)u+(\bar m-1)\bar Bx=1.$ Multiplying \eqref{eq:fppa} by the nonzero element $xu$ and using
$\bar f'(x)=0$, we obtain $\bar f''(x)\,xu
=
\bar m\,\bar A\,\bar B\,u^{m-1}.$
Since $\bar A$, $\bar B$, $\bar m$, and $u$ are nonzero, the right-hand side
does not vanish. As $xu\neq0$, we conclude that
$\bar f''(x)\neq0.$
\end{proof}


\section{Proof of Theorem \ref{th1}}\label{sec:proof}

\begin{proof}
Let $\theta$ be a root of $f(X)=X^n+A(BX+1)^m,$
and  $K=\Q(\theta)$. Let $p$ be a prime. Note that $f(0)=A
~\text{and}~
f'(0)=mAB.$

First, suppose that $p^2\mid A$. Since $p^2\mid f(0)
\quad\text{and}\quad
p\mid f'(0),$
Theorem~\ref{thm:nec} yields $p\mid I[\theta].$

Next assume that $p^2\mid D$ and $p\mid A$. By Lemma~\ref{lem:sqA},
we have $p^2\mid A$, so the previous case applies.

Finally, assume that $p^2\mid D$ and $p\nmid A$. By
Lemma~\ref{lem:coprime}, we have $p\nmid B(n-m)$. Let $a$ be as in
Lemma~\ref{lem:bezout}, so that $p^2\mid B(n-m)a+n.$
Applying Corollary~\ref{cor:cong} with $N=p^2$, we obtain
\[
p^2\mid f(a)
\iff
p^2\mid D.
\]
Hence $p^2\mid f(a)$. Moreover, the residue class $\bar a$ satisfies
\eqref{eq:lin} over $\F_p$. By Proposition~\ref{prop:Dzero},
$\bar f'(\bar a)=0,$
and therefore $p\mid f'(a).$
Applying Theorem~\ref{thm:nec} with $r=a$, we conclude that
$p\mid I[\theta].$

For the converse, we prove it by contraposition. Assume that
$p^2\nmid A
\quad\text{and}\quad
p^2\nmid D.$
We shall prove that $\Z[\theta]$ is $p$-maximal.

Suppose first that $p\mid A$. Since $p^2\nmid A$, every coefficient of
$f$ except the leading one is divisible by $p$, while the constant term
$A$ is not divisible by $p^2$. Hence $f$ is Eisenstein at $p$, and the
result follows from Proposition~\ref{prop:eis}.

Now assume that $p\nmid A$ and $p\mid D$. By Lemma~\ref{lem:coprime}, $p\nmid n B m(n-m).$
Let $a$ be as in Lemma~\ref{lem:bezout}, so that
$p^2\mid B(n-m)a+n.$ By Corollary~\ref{cor:cong},  
$p^2\nmid f(a),$
since $p^2\nmid D$. The residue class $\bar a$ satisfies
\eqref{eq:lin}. Since $p\mid D$, Proposition~\ref{prop:Dzero} implies
that
$\bar f(\bar a)=\bar f'(\bar a)=0.$
Hence Proposition~\ref{prop:tame} gives
$\bar f''(\bar a)\neq0,$
and therefore $p\nmid f''(a).$ Let $\m$ be a maximal ideal satisfying
$\C_\theta\subseteq\m
\quad\text{and}\quad
p\in\m,$
and put $F=\OK/\m$. Reducing modulo $\m$, we obtain $\bar f(\bar\theta)=0.$
Moreover,
$f'(\theta)\in\C_\theta\subseteq\m$
by the conductor-different identity~\eqref{eq:cd}, so
$\bar f'(\bar\theta)=0.$
Proposition~\ref{prop:residue} therefore shows that $\bar\theta$
satisfies~\eqref{eq:lin}. On the other hand, $p\mid B(n-m)a+n,$
so $\bar a$ also satisfies~\eqref{eq:lin} in $\F_p$. Since
$(\bar n-\bar m)\bar B$ is invertible in $\F_p$, the solution of
\eqref{eq:lin} is unique. Consequently,
$\bar\theta=\bar a
\quad\text{in }\F_p,$
or equivalently,
$\theta-a\in\m.$ Hence, all the hypotheses of Theorem~\ref{thm:suff} are therefore satisfied,
and hence $\Z[\theta]$ is $p$-maximal.

Finally, assume that $p\nmid A$ and $p\nmid D$. By
Lemma~\ref{lem:pmax}, it is enough to prove that no maximal ideal
contains $\C_\theta+p\OK$. Suppose, to the contrary, that there exists a
maximal ideal $\m$ containing $\C_\theta+p\OK$, and let
$F=\OK/\m$. As above,
$\bar f(\bar\theta)=\bar f'(\bar\theta)=0.
$
By Proposition~\ref{prop:residue}, $\bar\theta$ satisfies
\eqref{eq:lin}, and Proposition~\ref{prop:Dzero} therefore yields
$\bar D=0
\quad\text{in }F.$
By Lemma~\ref{lem:intmem}, this implies that $p\mid D$, contradicting
our assumption. Hence no such maximal ideal exists, and
$\Z[\theta]$ is $p$-maximal. This completes the proof.
\end{proof}

%
%
%



\section{Examples and infinite families}\label{sec:examples}



\begin{corollary}\label{cor:master}
Let $f=X^n+a_{n-1}X^{n-1}+\dots+a_1X+a_0\in\Z[X]$ be monic and irreducible
with $n\ge2$, let $\theta$ be a root of $f$, and let $p$ be a prime.  If
$p\mid a_1$ and $p^2\mid a_0$, then $f$ is
non-monogenic.
\end{corollary}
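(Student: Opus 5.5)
This is a direct application of Theorem~\ref{thm:nec} with the choice $r=0$. We have $f(0)=a_0$ and $f'(0)=a_1$. The hypotheses $p^2\mid a_0$ and $p\mid a_1$ are therefore exactly the conditions $p^2\mid f(r)$ and $p\mid f'(r)$ at $r=0$. Theorem~\ref{thm:nec} then gives $p\mid I(\theta)$, so $\Z[\theta]\neq\OK$ and $f$ is non-monogenic.

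As a sanity check on the degree bookkeeping used in that theorem, I would unwind the construction at $r=0$. The Taylor decomposition \eqref{eq:taylor2} becomes
\[
f=a_0+a_1X+X^2g,\qquad g=X^{n-2}+a_{n-1}X^{n-3}+\dots+a_2 ,
\]
which is monic of degree $n-2$; the case $n=2$ gives $g=1$. Put $y=\theta g(\theta)$ and $z=y/p$. Then $z$ satisfies the monic relation
\[
z^2+(a_1/p)z+(a_0/p^2)g(\theta)=0
\]
over $\OK$, so $z\in\OK$.

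It remains to see that $z\notin\Z[\theta]$. The element $y=W(\theta)$ with $W=Xg$ monic of degree $n-1<n$. Its coordinate on the basis element $\theta^{n-1}$ is $1$, which is not divisible by $p$. Hence $z\notin\Z[\theta]$, and the class of $z$ in $\OK/\Z[\theta]$ has order $p$.

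There is no real obstacle here; the only point needing care is that the hypothesis $n\ge2$ is what makes $g$ a genuine polynomial and $W$ of degree less than $n$. This is already built into Theorem~\ref{thm:nec}. Alternatively, one could invoke Theorem~\ref{thmm:necfactor} with $\bar\varphi=X$. That route requires first checking that $X$ is a repeated factor of $\bar f$, which holds because $p$ divides both $a_0$ and $a_1$. The $X$-adic expansion then has $a_0$ as its constant term, so the result follows in the same way.
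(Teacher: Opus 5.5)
Your proposal is correct and is essentially the paper's proof: the paper also just applies Theorem~\ref{thm:nec} at $r=0$, using $f(0)=a_0$ and $f'(0)=a_1$. Your unwinding of the construction and the alternative via Theorem~\ref{thmm:necfactor} with $\bar\varphi=X$ are both accurate, but neither is needed.
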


\begin{proof}
Note that $p\mid I(\theta)$ by  Theorem~\ref{thm:nec} as $f(0)=a_0$ and $f'(0)=a_1$.
\end{proof}

\begin{lemma}\label{lem:wild}
Let $f=X^n+a_{n-1}X^{n-1}+\dots+a_0\in\Z[X]$ be monic and irreducible with
root $\theta$, and let $p$ be a prime such that $p\mid n,~\text{and}~ p\mid a_i \text{ whenever } p\nmid i. $ If $p^2\mid f(r)$ for a single $r\in\Z$, then $p\mid I(\theta)$; in particular
$f$ is not monogenic.
\end{lemma}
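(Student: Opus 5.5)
The plan is to reduce to Theorem~\ref{thm:nec}. It is enough to show that the hypotheses force $p\mid f'(r)$ for every integer $r$. Once that holds, a single $r$ with $p^2\mid f(r)$ satisfies both hypotheses of Theorem~\ref{thm:nec}, and that theorem gives $p\mid I(\theta)$. Non-monogenity then follows, since $I(\theta)>1$.

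The key step is to compute $f'$ modulo $p$. We have
\[
f'(X)=nX^{n-1}+\sum_{i=1}^{n-1} i\,a_iX^{i-1}.
\]
The leading term vanishes modulo $p$ because $p\mid n$. For each $1\le i\le n-1$, look at the product $i\,a_i$. If $p\mid i$, then $p$ divides $i\,a_i$. If $p\nmid i$, then $p\mid a_i$ by hypothesis, so again $p$ divides $i\,a_i$. Hence every coefficient of $f'$ is divisible by $p$, so $\bar f'=0$ in $\F_p[X]$. In particular $p\mid f'(r)$ for all $r\in\Z$.

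Combining this with the assumption $p^2\mid f(r)$, the pair $(f(r),f'(r))$ meets the hypotheses of Theorem~\ref{thm:nec}, and we conclude $p\mid I(\theta)$.

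There is no real obstacle here. The only point needing care is that the constant term $a_0$ does not appear in $f'$, so the hypothesis on $a_i$ with $p\nmid i$ is needed only for $1\le i\le n-1$. The same argument also covers $r$ of any residue class. One might remark that $\bar f$ is then a $p$-th power in $\F_p[X]$, which explains why repeated roots modulo $p$ are automatic. This observation is not needed for the proof.
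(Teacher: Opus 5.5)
Your proposal is correct and is essentially the paper's own proof: since $p\mid n$ and $p\mid a_i$ whenever $p\nmid i$, every coefficient $i\,a_i$ of $f'$ is divisible by $p$, so $\bar f'=0$ and $p\mid f'(r)$ for every $r$. The single $r$ with $p^2\mid f(r)$ then satisfies the hypotheses of Theorem~\ref{thm:nec}, which gives $p\mid I(\theta)$.
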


\begin{proof}
By given hypothesis, every coefficient of
$f'=\sum_i i\,a_iX^{i-1}$ is divisible by $p$, that is $\bar f'=0$.  Hence
$p\mid f'(r)$ for every $r\in\Z$, and Theorem~\ref{thm:nec} applies.
\end{proof}

\subsection{Application to a Family of Trinomials}\label{ss:trinomial}

\begin{proposition}\label{prop:fam1}
Let $p\ne q$ be primes, $k\in\Z$, $n>m\geq 2$, and  let
$f(X)=X^n+qk\,X^m+p^2q$.
Then $f$
is non-monogenic.
\end{proposition}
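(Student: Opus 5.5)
Our plan is to apply Corollary~\ref{cor:master} at the prime $p$, taking $r=0$ in Theorem~\ref{thm:nec}. We read off $a_0=f(0)=p^2q$ and $a_1=f'(0)$. Since $m\ge 2$, the monomial $qkX^m$ contributes nothing to the coefficient of $X$, so $a_1=0$ and trivially $p\mid a_1$. Also $p^2\mid a_0=p^2q$. Hence both hypotheses of Corollary~\ref{cor:master} hold, provided $f$ is monic irreducible of degree $n\ge 2$, and the corollary then gives $p\mid I(\theta)$.

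The remaining point is irreducibility, which Corollary~\ref{cor:master} (through Theorem~\ref{thm:nec}) needs so that $f$ is the minimal polynomial of $\theta$. Every non-leading coefficient of $f$, namely $qk$ and $p^2q$, is divisible by $q$, and since $q\ne p$ we have $q^2\nmid p^2q$. So $f$ is Eisenstein at $q$ and therefore irreducible over $\Q$. This is the only place where the hypothesis $p\ne q$ is used.

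So the argument has two steps: first establish irreducibility via the Eisenstein criterion at $q$, then invoke Corollary~\ref{cor:master}, equivalently Theorem~\ref{thm:nec} with $r=0$, at the prime $p$. There is no real obstacle here. The one thing to check carefully is that $m\ge 2$ is exactly what forces the linear coefficient to vanish. If $m=1$, then $a_1=qk$, which need not be divisible by $p$, and the argument would fail.
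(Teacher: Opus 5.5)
Your proposal is correct and follows the paper's route: apply Theorem~\ref{thm:nec} (equivalently Corollary~\ref{cor:master}) with $r=0$, using $f(0)=p^2q$ and $f'(0)=0$, which holds because $m\ge 2$. The paper only asserts that $f$ is irreducible, and your Eisenstein argument at $q$ (valid since $q\ne p$ gives $q^2\nmid p^2q$) supplies the justification it omits.
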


\begin{proof}
Note that $f$ is irreducible over $\Q$ and $p\mid I(\theta)$ by Theorem~\ref{thm:nec}.  Hence $f$ is non-monogenic.
\end{proof}


\begin{example}
Consider $f(X)=X^{35}+2X^2+18.
$ Here $p=3$, $q=2$, $k=1$, $n=35$, and $m=2.$ Hence  $f$ is non-monogenic by above Proposition.

\end{example}

\begin{example} Let
$f(X)=X^{16}+10X^3+20.
$ Here $p=2$, $q=5$, $k=2$, $n=16$, and $m=3$. Hence $f$ is non-monogenic.
\end{example}

\begin{remark}\label{rem:translate}
Applying Corollary~\ref{cor:master} to $f(X+r)$, whose constant and linear
coefficients are $f(r)$ and $f'(r)$, returns Theorem~\ref{thm:nec} at a
general $r$; conversely, every application of Theorem~\ref{thm:nec} is an
application of Corollary~\ref{cor:master} to a translate of $f$. 
\end{remark}

\subsection{Application to a quadrinomial family}\label{ss:quadrinomial}

\begin{proposition}\label{prop:quadrinomial}
Let $n\ge3$ and $a,b,c\in\Z$ with $b^2=ac$, and let
$f(X)=X^n+aX^2+2bX+c$ be irreducible with root $\theta$.  If $p$ is a prime
with $p\mid c$ and $p\nmid a$, then $p\mid I(\theta)$; in particular $f$ is
non-monogenic.
\end{proposition}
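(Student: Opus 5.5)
The plan is to apply Theorem~\ref{thm:nec}: it suffices to find an integer $r$ with $p^2\mid f(r)$ and $p\mid f'(r)$. We cannot simply take $r=0$, since $f(0)=c$ need only be divisible by $p$, not $p^2$. Instead we use the hypothesis $b^2=ac$ to complete the square in the low-degree part:
\[
a\,(aX^2+2bX+c)=(aX+b)^2 .
\]
This suggests choosing $r$ so that $ar+b$ is highly divisible by $p$.

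First we check that $p\mid b$. Indeed $b^2=ac$ and $p\mid c$, so $p\mid b^2$, hence $p\mid b$.

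Since $p\nmid a$, the element $a$ is invertible modulo $p^2$. Choose $r\in\Z$ with $ar\equiv -b\pmod{p^2}$. Then $ar\equiv -b\equiv 0\pmod p$, and since $p\nmid a$ we get $p\mid r$.

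Next we show $p^2\mid f(r)$. Multiplying by $a$ gives
\[
a f(r)=a r^n+(ar+b)^2 .
\]
Here $p^2\mid r^n$ because $p\mid r$ and $n\ge 2$. Also $p^4\mid (ar+b)^2$ because $p^2\mid ar+b$. Hence $p^2\mid a f(r)$, and since $p\nmid a$, we conclude $p^2\mid f(r)$.

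Then we show $p\mid f'(r)$. We have
\[
f'(r)=n r^{n-1}+2(ar+b).
\]
Both terms are divisible by $p$: the first because $p\mid r$ and $n-1\ge 1$, the second because $p^2\mid ar+b$.

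Theorem~\ref{thm:nec} with this $r$ then yields $p\mid I(\theta)$, so $f$ is non-monogenic.

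The only real obstacle is spotting the choice of $r$: it must come from the Bézout-type congruence $ar\equiv -b \pmod{p^2}$ rather than from $r=0$, and it is the identity $a(aX^2+2bX+c)=(aX+b)^2$ that makes this choice work. The remaining verifications are routine divisibility checks. Irreducibility is assumed in the statement, so no separate argument is needed for it.
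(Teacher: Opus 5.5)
Your proof is correct and uses the same tool as the paper, Theorem~\ref{thm:nec}; the only difference is the choice of $r$. The paper takes $r=0$, applying Corollary~\ref{cor:master} with $a_1=2b$ and $a_0=c$.

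Your claim that $r=0$ cannot work is mistaken. From $p\mid b$ you get $p^2\mid b^2=ac$, and $p\nmid a$ then forces $p^2\mid c$. This is just your own identity $af(r)=ar^n+(ar+b)^2$ evaluated at $r=0$.

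More generally, your identity shows that every $r\equiv 0\pmod p$ works. Indeed, $p^2\mid(ar+b)^2$ needs only $p\mid ar+b$, and $p\mid f'(r)$ follows the same way. So the B\'ezout step $ar\equiv -b\pmod{p^2}$ is superfluous, and the ``real obstacle'' of choosing $r$ that you identify is not actually there.
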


\begin{proof}
Proof is an immediate consequence of Corollary~\ref{cor:master}.
\end{proof}

\begin{example}\label{ex:quadrinomial}
Let $f(X)=X^5+X^2+6X+9$.  Taking $p=3$, Proposition~\ref{prop:quadrinomial}
gives $3\mid I(\theta)$ and hence $f$ is non-monogenic.
\end{example}
\subsection{Application to power-compositional polynomials $g(X^p)$}\label{ss:omial}{}
\begin{remark}
Recall that for $c\in\Z$ the residue $c^{\,p}\bmod p^2$ depends only on $c$ modulo $p$,
since $(c+p)^p\equiv c^{\,p}\pmod {p^2}$.  Write
$\omega(c)\in\Z/p^2\Z$ for this residue; the $p$ values
$\omega(0),\dots,\omega(p-1)$ are the Teichm\"uller lifts of $\F_p$.
    \end{remark}
\begin{theorem}\label{thm:powcomp}
Let $p$ be a prime, let $g\in\Z[X]$ be monic, and let $f(X)=g(X^p)$ be
irreducible with root $\theta$.  If
\[
g\bigl(\omega(c)\bigr)\equiv0 \pmod{p^2}\qquad\text{for some }c\in\{0,\dots,p-1\},
\]
then $p\mid I(\theta)$; in particular $f$ is not monogenic.
\end{theorem}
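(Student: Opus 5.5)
Take the integer $r=c$ and verify the two hypotheses of Theorem~\ref{thm:nec} for $f(X)=g(X^p)$; since $f$ is monic and irreducible by assumption, that theorem then gives $p\mid I(\theta)$ at once.

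\emph{The first hypothesis.} The representative $\omega(c)$ is, by definition, the class of $c^{\,p}$ modulo $p^2$. The map $y\mapsto g(y)\bmod p^2$ depends only on $y\bmod p^2$, because $g$ has integer coefficients. Hence
\[
f(c)=g(c^{\,p})\equiv g\bigl(\omega(c)\bigr)\equiv 0\pmod{p^2}.
\]
Note that the hypothesis is only meaningful in this form: evaluating $g$ at an element of $\Z/p^2\Z$ is well defined, so no further care is needed there.

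\emph{The second hypothesis.} By the chain rule,
\[
f'(X)=pX^{p-1}g'(X^p),
\]
so every coefficient of $f'$ is divisible by $p$. In particular $p\mid f'(c)$; equivalently, $\bar f'=0$ in $\F_p[X]$. This is exactly the situation of Lemma~\ref{lem:wild}. Indeed, that lemma applies directly: $p\mid n=p\deg g$, and the only nonzero coefficients of $f$ sit in degrees divisible by $p$. So we may simply cite Lemma~\ref{lem:wild} with $r=c$.

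\emph{Conclusion.} With $p^2\mid f(c)$ and $p\mid f'(c)$, Theorem~\ref{thm:nec} (or Lemma~\ref{lem:wild}) yields $p\mid I(\theta)$, so $\Z[\theta]\neq\OK$ and $f$ is not monogenic.

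The only real obstacle is bookkeeping: confirming that $f(c)\bmod p^2$ is determined by $c^{\,p}\bmod p^2$. This is immediate from integrality of $g$, and it is consistent with the preceding remark that $c^{\,p}\bmod p^2$ depends only on $c\bmod p$. So the theorem reduces to a direct corollary of Theorem~\ref{thm:nec}.
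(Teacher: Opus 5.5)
Your proposal is correct and matches the paper's proof: it likewise sets $r=c$, uses $f(c)=g(c^{\,p})\equiv g(\omega(c))\equiv 0 \pmod{p^2}$ together with $f'(X)=pX^{p-1}g'(X^p)$, and concludes by Lemma~\ref{lem:wild}, which is Theorem~\ref{thm:nec} applied with $r=c$.
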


\begin{proof}
Note that $f'(X)=p\,X^{p-1}g'(X^p)$, and
$f(c)=g(c^{\,p})\equiv g(\omega(c))\equiv0 \pmod{p^2}$.  Thus, $f$ is non-monogenic by Lemma~\ref{lem:wild} with $r=c$.
\end{proof}

\begin{example}\label{ex:powcomp}
Let $p=3$, so $\omega(0)=0$, $\omega(1)=1$, $\omega(2)=8$. Then
\begin{itemize}
\item[(i)] $g=Y^2+5Y+3$ has $g(1)=9$, so  
$3\mid I(\theta)$ and $X^6+5X^3+3$ is non-monogenic.
\item[(ii)] $g=Y^2-7Y+1$ has $g(8)=9$, so $X^6-7X^3+1$ is non-monogenic.
\end{itemize}
\end{example}

\subsection{Comparison with the classical route}\label{ss:comparison}

Fix a prime $p$ and a monic irreducible $f\in\Z[X]$ of degree $n$.  For applying Dedekind's criterion, one need to factor
$\bar f=\prod_i\bar g_i^{\,e_i}$ in $\F_p[X]$, lifts $\bar g=\prod_i\bar g_i$
and $\bar h=\bar f/\bar g$ to monic $g,h\in\Z[X]$, forms
$M=(gh-f)/p$, and computes $\gcd(\bar M,\bar g,\bar h)$. The order is
$p$-maximal if and only if this gcd is $1$.  The Newton polygon and Montes
methods require, in addition, $\phi$-adic expansions of $f$, the sides of the
associated polygons and their residual polynomials.

For example, consider $f=X^3+2X+22$ and $p=5$. Here
$\bar f=(X-1)^2(X+2),~
g=(X-1)(X+2),~
h=X-1,~
gh-f=-5X-20,~ M=-X-4,$
and $\bar M=-(X-1)$ in $\F_5[X]$, so
$\gcd(\bar M,\bar g,\bar h)=X-1\neq1$ and $5\mid I(\theta)$. In contrast, Theorem~\ref{thm:nec} immediately yields $5\mid I(\theta)$ with $r=1$.

\end{document}